\numberwithin{equation}{section}
\author{Lars Diening}
\address{Fakult\"at f\"ur Mathematik, Universit\"at Bielefeld, Postfach 100131, D-33501 Bielefeld, Germany}
\email{lars.diening@uni-bielefeld.de}
\author{Simon Nowak}
\address{Fakult\"at f\"ur Mathematik, Universit\"at Bielefeld, Postfach 100131, D-33501 Bielefeld, Germany}
\email{simon.nowak@uni-bielefeld.de}
 \DeclareMathOperator*{\osc}{osc}
\keywords{Nonlocal equations, fractional $p$-Laplacian, regularity, Calder\'on-Zygmund estimates, fractional maximal functions}
\subjclass[2020]{Primary 35R09, 35B65; Secondary 35D30, 47G20}
\begin{document}
	
	\title{Calder\'on-Zygmund estimates for the fractional $p$-Laplacian}
	
	\begin{abstract}
		We prove fine higher regularity results of Calder\'on-Zygmund-type for equations involving nonlocal operators modelled on the fractional $p$-Laplacian with possibly discontinuous coefficients of VMO-type.
		We accomplish this by establishing precise pointwise bounds in terms of certain fractional sharp maximal functions. This approach is new already in the linear setting and enables us to deduce sharp regularity results also in borderline cases.
	\end{abstract}
	
	\thanks{Funded by the Deutsche Forschungsgemeinschaft (DFG, German Research Foundation) - SFB 1283/2 2021 - 317210226}
	
	\maketitle

	\section{Introduction}
	\subsection{Overview} \label{sec:overview}
	In this paper, we study nonlocal integro-differential operators of \emph{fractional $p$-Laplacian-type} formally defined by
	\begin{equation} \label{eq:splap}
	(-\Delta )^s_{p,A} u(x) := p.v. \int_{\mathbb{R}^n} \frac{A(x,y)}{|x-y|^{n+sp}} |u(x)-u(y)|^{p-2}(u(x)-u(y)) dy,
	\end{equation}
	where $s \in (0,1)$, $p \in [2,\infty)$ and $A$ is some coefficient. 
	Although we consider such general nonlinear operators, our approach and some of our main results are new already in the linear case when $p=2$. Moreover, for $A \equiv 1$ the operator $(-\Delta )^s_{p,A}$ reduces to the standard fractional $p$-Laplacian $(-\Delta )^s_p$, in which case all of our main results already represent nontrivial and new developments.
	
	Nonlocal operators of the type \eqref{eq:splap} have attracted a lot of attention in recent years (see e.g.\ \cite{DKP,KMS2,SchikorraMA,CozziJFA,BL,BLS,CaffSire,KLL}) and arise for instance naturally in the calculus of variations, since they appear in the Euler-Lagrange equation of the weighted $W^{s,p}$ energy
	$$ u \mapsto \int_{\mathbb{R}^n}\int_{\mathbb{R}^n} A(x,y) \frac{|u(x)-u(y)|^p}{|x-y|^{n+sp}}dydx.$$
	Further areas in which nonlocal operators arise are for instance stochastic processes (see e.g.\ \cite{bertoin,Fukushima}), harmonic maps (see e.g.\ \cite{SchikorraHM}), classical harmonic analysis (see e.g.\ \cite{landkof}), phase transitions (see e.g.\ \cite{fife}), conformal geometry (see e.g.\ \cite{CG,Case-Chang,GZ}), physics of materials and relativistic models (see e.g.\ \cite{LiebYau2}), fluid dynamics (see e.g.\ \cite{NInvent,CaffVass}), kinetic theory (see e.g.\ \cite{ImbSil}) and image processing (see e.g.\ \cite{GOsh}).
	
	The purpose of this work is to understand the optimal regularity of weak solutions to nonlocal equations of the form
	\begin{equation} \label{eq:splapeq}
		(-\Delta )^s_{p,A} u = f \text{ in } \Omega \subset \mathbb{R}^n
	\end{equation}
	in terms of the data $f$ for a large class of coefficients $A$, including discontinuous ones. 
	
	In particular, our main results extend, sharpen and unify various previous regularity results obtained either for the standard fractional $p$-Laplacian or for linear nonlocal equations with possibly discontinuous coefficients of VMO-type.
	
	In fact, in the case of the standard fractional $p$-Laplacian with $p \geq 2$, in the papers \cite{BL,BLS} Brasco, Lindgren and Schikorra in particular proved the following fundamental higher regularity results:
	
	\begin{itemize}
		\item Higher differentiability: If $s \leq \frac{p-1}{p}$, then
		\begin{equation} \label{eq:HDBL}
		f \in W^{s,\frac{p}{p-1}}(\Omega) \implies u \in W^{t,p}_{\loc}(\Omega) \text{ for all } t \in \left (s, \frac{sp}{p-1} \right ).
		\end{equation}
		\item Higher H\"older regularity: For all $\alpha \in \left (0, \min \left \{\frac{sp}{p-1},1 \right \} \right )$, 
		\begin{equation} \label{eq:HHBLS}
		f \in L^{\frac{n}{sp-\alpha(p-1)}+\varepsilon}(\Omega) \text{ for some } \varepsilon>0 \implies u \in C^\alpha_{\loc}(\Omega).
		\end{equation}
	\end{itemize}

	We improve and extend these results in three ways:
	\begin{itemize}
		\item We sharpen the assumptions on $f$, proving that the conclusion of the higher differentiability result \eqref{eq:HDBL} holds under the weaker requirement that $f \in L^\frac{p}{p-1}(\Omega)$ and that the higher H\"older regularity result \eqref{eq:HHBLS} remains valid under the borderline assumption that $f$ belongs to the Marcinkiewicz space $L^{\frac{n}{sp-\alpha(p-1)},\infty}(\Omega)$. 
		\item We show that these results remain valid if the fractional $p$-Laplacian is perturbed by \emph{VMO coefficients}. In particular, we are able to gain a substantial amount of higher differentiability for equations with possibly discontinuous coefficients, which is not possible for local second-order elliptic equations with such general coefficients and thus a \emph{purely nonlocal phenomenon}, see Section \ref{sec:pr}.
		\item We additionally prove precise Sobolev regularity results for $f$ belonging to Lebesgue spaces, establishing a complete Calder\'on-Zygmund theory in $W^{t,q}$ spaces for the fractional $p$-Laplacian with VMO coefficients.
	\end{itemize}

	These fine Calder\'on-Zygmund-type estimates also extend and sharpen corresponding ones recently obtained by the second author in \cite{MeV,MeI} in the case when $p=2$, that is, in the context of linear nonlocal equations with VMO coefficients. The approach in \cite{MeV,MeI} relies on certain fractional gradients called dual pairs introduced in \cite{KMS1} and involved covering arguments inspired by \cite{caffaP,KMS1}. 

	The regularity results obtained in \cite{MeV,MeI} in turn generalize similar ones due to Mengesha, Schikorra and Yeepo established in \cite{MSY}, who assume that the coefficient $A$ is H\"older continuous and whose proof is predicated on commutator estimates inspired by classical ones from \cite{CRW}.
	
	We want to stress at this point that our approach departs substantially from the ones implemented in \cite{MSY,MeV,MeI} in the sense that it relies neither on commutator estimates nor on dual pairs. Instead, inspired by corresponding approaches in the context of local equations of $p$-Laplacian-type (see e.g.\ \cite{Iwaniec,KinnunenZhou,KMUniversal,DKSBMO,BCDKS,BDGP}) and some ideas introduced in \cite{KMS2,KMS18} to obtain fine zero-order regularity estimates in the nonlocal setting, we establish precise pointwise estimates in terms of certain \emph{fractional sharp maximal functions} in the spirit of classical harmonic analysis, see Theorem \ref{thm:fracmaxest}. 
	
	Since the relevant mapping properties of these maximal functions are known, this leads to the desired Calder\'on-Zygmund estimates in a considerably more streamlined fashion than in the mentioned previous works and enables us to additionally capture a large class of nonlinear nonlocal equations as well as some \emph{borderline cases} which are new already in the linear setting.

	\subsection{Setting}
	Before stating our main results, we need to introduce our setup in a more rigorous fashion. 
	
	For $\Lambda \geq 1,$ we define $\mathcal{L}_0(\Lambda)$ to be the class of all measurable coefficients $A:\mathbb{R}^n \times \mathbb{R}^n \to \mathbb{R}$ that satisfy the following conditions:
	\begin{itemize}
		\item There exists a constant  $\Lambda \geq 1$ such that 
		$$\Lambda^{-1} \leq A(x,y) \leq \Lambda \text{ for almost all } x,y \in \mathbb{R}^n .$$
		\item  $
		A(x,y)=A(y,x) \text{ for almost all } x,y \in \mathbb{R}^n.
		$
	\end{itemize}
	
	In order to control the growth of solutions at infinity, for $\beta >0$ and $q \in [1,\infty)$ we consider the tail spaces
	$$L^q_{\beta}(\mathbb{R}^n):= \left \{u \in L^1_{\loc}(\mathbb{R}^n) \mathrel{\Big|} \int_{\mathbb{R}^n} \frac{|u(y)|^q}{(1+|y|)^{n+\beta}}dy < \infty \right \}.$$
	
	We remark that a function $u \in L^1_{\loc}(\mathbb{R}^n)$ belongs to the space $L^{p-1}_{sp}(\mathbb{R}^n)$ if and only if the \emph{nonlocal tails} of $u$ given by
	$$ \textnormal{Tail}(u;x_0,R):=\left (R^{sp} \int_{\mathbb{R}^n \setminus B_{R}(x_0)} \frac{|u(y)|^{p-1}}{|x_0-y|^{n+sp}}dy \right )^\frac{1}{p-1}$$
	are finite for all $R>0$, $x_0 \in \mathbb{R}^n$.
	
	For notational convenience, we also consider the following nonlocal excess functional which was introduced in \cite{KMS2}.
	\begin{definition}[Nonlocal excess functional] \label{def:excess}
		Let $s \in (0,1)$ and $p \in [2,\infty)$. Given $x_0 \in \mathbb{R}^n$, $R>0$ and $u \in L^{p-1}_{sp}(\mathbb{R}^n)$, we define the nonlocal excess functional $E(u;x_0,R)$ by
		$$
			E(u;x_0,R):=\left (\dashint_{B_{R}(x_0)} |u-(u)_{B_R(x_0)}|^{p-1} dx \right )^\frac{1}{p-1} + \textnormal{Tail}(u-(u)_{B_R(x_0)};x_0,R).
		$$
	\end{definition}
	
	In addition, we use the notation
	\begin{equation} \label{eq:dexp}
	p^\star_s := \begin{cases} \normalfont
		\frac{np}{n-sp} & \text{ if } sp<n \\
		+\infty & \text{ if } sp>n,
	\end{cases} \quad \quad (p^\star_s)^\prime := \begin{cases} \normalfont
	\frac{np}{n(p-1)+sp} & \text{ if } sp<n \\
	1 & \text{ if } sp>n.
	\end{cases}
	\end{equation}
	
	We are now in the position to define weak solutions to \eqref{eq:splapeq} as follows.
	\begin{definition}[Weak solutions] \label{def:weaksol}
		Let $\Omega \subset \mathbb{R}^n$ be a bounded domain, $s \in (0,1)$, $p \in [2,\infty)$ and assume that $A \in \mathcal{L}_0(\Lambda)$ for some $\Lambda \geq 1$. Moreover, assume that $f \in L^{(p^\star_s)^\prime}(\Omega)$ if $sp \neq n$ and $f \in L^q(\Omega)$ for some $q>1$ if $sp=n$. We say that $u \in W^{s,p}(\Omega) \cap L^{p-1}_{sp}(\mathbb{R}^n)$ is a weak solution of \eqref{eq:splapeq}, if 
		\begin{equation} \label{eqweaksol}
			\begin{aligned}
			&\int_{\mathbb{R}^n} \int_{\mathbb{R}^n} \frac{A(x,y)}{|x-y|^{n+sp}} |u(x)-u(y)|^{p-2}(u(x)-u(y))(\varphi(x)-\varphi(y))dydx \\ &= \int_{\Omega} f \varphi dx \quad \forall \varphi \in C_0^\infty(\Omega).
			\end{aligned}
		\end{equation}
	\end{definition}
	For the definition of the standard fractional Sobolev spaces $W^{s,p}(\Omega)$, we refer to Section \ref{fracSob1}.
	
	In view of H\"older's inequality and the fractional Sobolev embedding, the assumptions from Definition \ref{def:weaksol} ensure that $f$ belongs to the dual of $W^{s,p}_0(\Omega)$, which guarantees the existence of weak solutions, see e.g.\ \cite[Proposition 2.12]{BLS}.
	
	Since we are concerned with proving higher regularity, it is natural to impose an additional regularity assumption on $A$. Namely, we require $A$ to be of vanishing mean oscillation or even to be merely small in BMO in the sense of the following definition.
	
	\begin{definition}[Small BMO and VMO coefficients]
		Let $\delta>0$, $\Lambda \geq 1$ and $A \in \mathcal{L}_0(\Lambda)$. 
		\begin{itemize}
			\item For $R>0$ and $x_0 \in \mathbb{R}^n$, we say that $A$ is $\delta$-vanishing in $B_R(x_0)$, if $$ \sup_{0<r \leq R} \, \dashint_{B_r(x_0)} \dashint_{B_r(x_0)} |A(x,y)-(A)_{r,x_0}|dydx \leq \delta ,$$
			where $(A)_{r,x_0}:= \dashint_{B_r(x_0)} \dashint_{B_r(x_0)} A(x,y)dydx$.
			\item For $R_0>0$, we say that $A$ is $(\delta,R_0)$-BMO in a domain $\Omega \subset \mathbb{R}^n$, if for any $x_0 \in \Omega$ and any $R \in (0,R_0]$ with $B_R(x_0) \subset \Omega$, $A$ is $\delta$-vanishing in $B_R(x_0)$. 
			\item We say that $A$ is VMO in $\Omega$, if for any $\delta>0$, there exists some $R_0>0$ such that $A$ is $(\delta,R_0)$-BMO in $\Omega$.
		\end{itemize}
	\end{definition}
	
	If $A$ belongs to the standard space $\textnormal{VMO}(\Omega \times \Omega)$ as defined in e.g.\ \cite[Section 2.1.1]{MPS} or \cite{Sarason}, then $A$ is also VMO in $\Omega$ in the above sense. However, our assumption essentially only requires $A$ to be of vanishing mean oscillation in some arbitrarily small open neighbourhood of the diagonal in $\Omega \times \Omega$, while away from the diagonal $A$ does not need to satisfy any regularity assumption at all.
	In analogy to classical VMO functions, if $A$ is continuous in an open neighbourhood of the diagonal in $\Omega \times \Omega$, then $A$ is also VMO in $\Omega$. 
	Nevertheless, continuity close to the diagonal is not essential, since there are plenty of discontinuous VMO functions, see e.g.\ \cite[Formula (1.8) and (1.9)]{MeV}.
	
	\subsection{Main results} \label{sec:mr}
	
	We establish the following Sobolev regularity result for equations of fractional $p$-Laplace-type with coefficients that are sufficiently small in BMO.
	
	\begin{theorem}[Calder\'on-Zygmund estimates] \label{thm:CZestBMO}
		Let $\Omega \subset \mathbb{R}^n$ be a bounded domain, $s \in (0,1)$, $p \in [2,\infty)$, $R_0>0$ and $\Lambda \geq 1$. In addition, let $f$ be as in Definition \ref{def:weaksol} and fix some 
		\begin{equation} \label{eq:trange}
		 t \in \bigg [s, \min \left \{\frac{sp}{p-1},1 \right \} \bigg ).
		\end{equation}
		There exists some small $\delta=\delta(n,s,p,\Lambda,t)>0$ such that if $A \in \mathcal{L}_0(\Lambda)$ is $(\delta,R_0)$-BMO in $\Omega$, 
		then for any weak solution $u \in W^{s,p}(\Omega) \cap L^{p-1}_{sp}(\mathbb{R}^n)$
		of \eqref{eq:splapeq}, any $q \in \big (1,\frac{n}{sp-t(p-1)} \big)$ and $q^\star:=\frac{nq(p-1)}{n-(sp-t(p-1))q}$,
		we have the implication 
		\begin{equation} \label{eq:CZregBMO}
			f \in L^q(\Omega) \implies u \in
				W^{t,q^\star}_{\loc}(\Omega).
		\end{equation}
		Moreover, for any $R >0$ and any $x_0 \in \Omega$ such that $B_R(x_0) \subset \Omega$, we have
		\begin{equation} \label{eq:CZest}
			\begin{aligned}
				[u]_{W^{t,q^\star}(B_{R/2}(x_0))} \leq C \left ( R^{n/q^\star-t} E(u;x_0,R) + \norm{f}_{L^q(B_R(x_0))}^\frac{1}{p-1} \right ),
			\end{aligned}
		\end{equation}
		where $C$ depends only on $n,s,p,\Lambda,t,q,R_0$ and $\max \{\textnormal{diam}(\Omega),1\}$.
	\end{theorem}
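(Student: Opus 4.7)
The plan is to derive Theorem \ref{thm:CZestBMO} from the pointwise fractional sharp maximal function estimate asserted in Theorem \ref{thm:fracmaxest} by combining it with a Calderón-DeVore-Sharpley-type characterisation of $W^{t,q^\star}$ via sharp fractional maximal functions and with the Hardy-Littlewood-Sobolev mapping property of the standard fractional maximal operator. The nonlinear scaling $u \leftrightarrow f^{1/(p-1)}$ is encoded in the definition of $q^\star$ and is the reason for the factor $p-1$ appearing throughout.

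First, I would fix $x_0 \in \Omega$ and $R>0$ with $B_R(x_0) \subset \Omega$, reducing to the case $R \leq R_0$ at the outset so that the BMO smallness hypothesis is applicable on every subball. Theorem \ref{thm:fracmaxest} should then provide, for almost every $x \in B_{R/2}(x_0)$, a pointwise bound of the form
\[
\mathcal{M}^{\#,t}_{R/2} u(x) \lesssim \bigl(\mathcal{M}_\alpha(|f|\mathbf{1}_{B_R(x_0)})(x)\bigr)^{\frac{1}{p-1}} + E(u;x_0,R),
\]
where $\mathcal{M}^{\#,t}$ is a localised sharp fractional maximal function of order $t$, $\mathcal{M}_\alpha$ is the standard fractional maximal operator of order $\alpha := sp - t(p-1)$, and the smallness parameter $\delta$ is chosen small enough in terms of $n,s,p,\Lambda,t$ to activate the frozen-coefficient comparison built into that theorem. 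Positivity of $\alpha$ follows from the constraint $t < sp/(p-1)$.

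Second, since $t<1$, a Calderón-DeVore-Sharpley-type characterisation of fractional Sobolev seminorms yields
\[
[u]_{W^{t,q^\star}(B_{R/2}(x_0))} \lesssim \|\mathcal{M}^{\#,t}_{R/2} u\|_{L^{q^\star}(B_{R/2}(x_0))} + R^{\frac{n}{q^\star}-t} E(u;x_0,R).
\]
Substituting the pointwise bound and invoking $\|g^{1/(p-1)}\|_{L^{q^\star}} = \|g\|_{L^{q^\star/(p-1)}}^{1/(p-1)}$ reduces matters to the classical estimate $\|\mathcal{M}_\alpha g\|_{L^{q^\star/(p-1)}(\mathbb{R}^n)} \lesssim \|g\|_{L^q(\mathbb{R}^n)}$ applied to $g = |f|\mathbf{1}_{B_R(x_0)}$. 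This is the Muckenhoupt-Wheeden mapping property for the fractional maximal operator, available precisely because the definition of $q^\star$ is equivalent to
\[
\frac{p-1}{q^\star} = \frac{1}{q} - \frac{\alpha}{n},
\]
with the constraint $q < n/(sp - t(p-1))$ ensuring that $q^\star/(p-1)$ is finite. Combining the pieces delivers \eqref{eq:CZest} on $B_{R/2}(x_0)$, and a standard Vitali-type covering argument then promotes the local bound to the global conclusion $u \in W^{t,q^\star}_{\loc}(\Omega)$.

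The main obstacle, which lives in Theorem \ref{thm:fracmaxest} rather than in the deduction itself, is the pointwise sharp maximal estimate, whose proof will require freezing the VMO coefficient to its ball average, controlling the comparison error by an excess decay and higher integrability scheme, and absorbing nonlocal contributions into $E(u;x_0,R)$. Within the argument above, the delicate point is the bookkeeping of the nonlinear scaling: the factor $1/(p-1)$ that converts an $L^q$-bound on $f$ into a bound on $u$ must be exactly balanced by the fractional differentiability gain $\alpha = sp - t(p-1)$ available from $\mathcal{M}_\alpha$, so that the Hardy-Littlewood-Sobolev target space lands precisely at $L^{q^\star}$ rather than at a strictly weaker or stronger exponent.
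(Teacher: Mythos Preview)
Your overall strategy matches the paper's: invoke the pointwise estimate of Theorem~\ref{thm:fracmaxest}, pass to $L^{q^\star}$ via a sharp-maximal-function characterisation of fractional Sobolev spaces, and use the Hardy--Littlewood--Sobolev mapping property of $\mathcal{M}_\alpha$ with $\alpha = sp - t(p-1)$. The bookkeeping of the nonlinear exponent you describe is exactly right.

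However, there is a genuine gap in the second step. The inequality you invoke,
\[
[u]_{W^{t,q^\star}(B_{R/2}(x_0))} \lesssim \|\mathcal{M}^{\#,t}_{R/2} u\|_{L^{q^\star}(B_{R/2}(x_0))} + \text{lower order},
\]
is \emph{false} in general. The Calder\'on space defined by $\mathcal{M}^{\#,t} u \in L^{q^\star}$ coincides with the Triebel--Lizorkin space $F^{t}_{q^\star,\infty}$, which is strictly larger than $W^{t,q^\star}=F^{t}_{q^\star,q^\star}$; equivalently, plugging the pointwise bound $|u(x)-u(y)|\lesssim (\mathcal{M}^{\#,t}u(x)+\mathcal{M}^{\#,t}u(y))|x-y|^{t}$ into the Gagliardo seminorm leaves the divergent kernel $|x-y|^{-n}$. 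So a Calder\'on--DeVore--Sharpley characterisation at the \emph{same} order $t$ cannot close the argument.

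The paper fixes this with an $\varepsilon$-shift: one applies Theorem~\ref{thm:fracmaxest} at the slightly higher order $t_\varepsilon:=t+\varepsilon$ (still in the admissible range \eqref{eq:trange}), takes the $L^{q^\star_\varepsilon}$ norm with $q^\star_\varepsilon:=\frac{nq^\star}{n+\varepsilon q^\star}<q^\star$, and then uses that the pointwise inequality of Proposition~\ref{prop:sfm} controls the Nikolskii seminorm $[u]_{N^{t_\varepsilon,q^\star_\varepsilon}}$, together with the genuine embedding $N^{t_\varepsilon,q^\star_\varepsilon}\hookrightarrow W^{t,q^\star}$ (Proposition~\ref{prop:embedding}), which holds precisely because $t_\varepsilon>t$ and $t_\varepsilon - n/q^\star_\varepsilon = t - n/q^\star$. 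Since $sp - t_\varepsilon(p-1)$ is still positive and the corresponding target exponent for $\mathcal{M}_{sp-t_\varepsilon(p-1)}$ is exactly $q^\star_\varepsilon/(p-1)$, the Hardy--Littlewood--Sobolev step goes through unchanged. You should also note that the paper handles the $\|u\|_{L^{q^\star}}$ lower-order term separately via the zero-order potential estimates from \cite{KMS2}, and obtains the scale-invariant estimate \eqref{eq:CZest} for general $R$ by first proving a unit-scale version (Corollary~\ref{cor:CZest1}) and then rescaling, rather than by assuming $R\le R_0$ at the outset.
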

	
	\begin{remark}[Sobolev regularity under VMO coefficients] \label{rem:VMO} \normalfont
	A straightforward consequence of Theorem \ref{thm:CZestBMO} is that if we impose the slightly stronger assumption that $A$ is VMO in $\Omega$, then in the setting of Theorem \ref{thm:CZestBMO} the implication \eqref{eq:CZregBMO} holds for all $t$ in the range $s \leq t < \min \big \{\frac{sp}{p-1},1 \big \}$.
	\end{remark} 

	\begin{remark}[SOLA] \label{rem:SOLA} \normalfont
		If the exponent $q>1$ is smaller than the duality exponent $(p^\star_s)^\prime$ as defined in \eqref{eq:dexp}, then the requirements on $f$ from Definition \ref{def:weaksol} are stronger than the one imposed in \eqref{eq:CZregBMO}. In this case it is more natural to consider a more general notion of solutions called SOLA (=\,solutions obtained by limiting approximations) as defined in \cite[Definition 1]{KMS2}, which are essentially distributional solutions that can be approximated by weak solutions of corresponding regularized problems. Such solutions are known to exist already when $f \in L^1(\Omega)$, see \cite[Theorem 1.1]{KMS2}.
		While for simplicity we restrict ourselves to standard weak solutions, we remark that by combining Theorem \ref{thm:CZestBMO} with a compactness argument similar to the one in \cite[Corollary 4.5]{KNS}, it is straightforward to show that the implication \eqref{eq:CZregBMO} and the estimate \eqref{eq:CZest} remain valid for such SOLA instead of weak solutions.
	\end{remark} 
	
	Next, we have the following higher H\"older regularity result under sharp borderline assumptions on $f$.
	\begin{theorem}[Fine higher H\"older regularity] \label{thm:fineHold}
	Let $\Omega \subset \mathbb{R}^n$ be a bounded domain, $s \in (0,1)$, $p \in [2,\infty)$, $R_0>0$ and $\Lambda \geq 1$. In addition, let $f$ be as in Definition \ref{def:weaksol} and fix some \begin{equation} \label{eq:trangex}
		\alpha \in \left (0, \min \left \{\frac{sp}{p-1},1 \right \} \right )
	\end{equation} such that $sp-\alpha(p-1)<n$. There exists some small $\delta=\delta(n,s,p,\Lambda,\alpha)>0$, such that if $A \in \mathcal{L}_0(\Lambda)$ is $(\delta,R_0)$-BMO in $\Omega$, then for any weak solution $u \in W^{s,p}(\Omega) \cap L^{p-1}_{sp}(\mathbb{R}^n)$
	of \eqref{eq:splapeq}, we have the implication 
	\begin{equation} \label{eq:fineHold}
		f \in L^{\frac{n}{sp-\alpha(p-1)},\infty}(\Omega) \implies u \in C^\alpha_{\loc}(\Omega).
	\end{equation}
	Moreover, for any $R >0$ and any $x_0 \in \Omega$ such that $B_R(x_0) \subset \Omega$, we have
	\begin{equation} \label{eq:FineHoldest}
			[u]_{C^\alpha(B_{R/2}(x_0))} \leq C \left (R^{-\alpha} E(u;x_0,R) + \norm{f}_{L^{\frac{n}{sp-\alpha(p-1)},\infty}(B_R(x_0))}^\frac{1}{p-1} \right ) ,
	\end{equation}
	where $C$ depends only on $n,s,p,\Lambda,\alpha,R_0$ and $\max \{\textnormal{diam}(\Omega),1\}$.
	\end{theorem}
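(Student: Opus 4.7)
The strategy is to combine the key pointwise fractional sharp maximal function estimate from Theorem \ref{thm:fracmaxest} with two standard harmonic-analytic ingredients: the Campanato characterization of H\"older spaces via fractional sharp maximal functions, and the endpoint mapping property of the fractional maximal operator from Marcinkiewicz spaces into $L^\infty$.

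First, I would invoke the Campanato--Meyers characterization: up to a dimensional constant,
\begin{equation*}
	[u]_{C^\alpha(B_{R/2}(x_0))} \leq C \sup_{x \in B_{R/2}(x_0)} \sup_{0 < r \leq R/2} r^{-\alpha} \dashint_{B_r(x)} |u - (u)_{B_r(x)}| dy.
\end{equation*}
This reduces \eqref{eq:FineHoldest} to a uniform pointwise bound on the fractional sharp maximal function of $u$ on $B_{R/2}(x_0)$. Applying Theorem \ref{thm:fracmaxest} then yields a pointwise estimate of the form
\begin{equation*}
	r^{-\alpha} \dashint_{B_r(x)} |u - (u)_{B_r(x)}| dy \leq C \bigl( R^{-\alpha} E(u;x_0,R) + [M_\gamma f(x)]^{1/(p-1)} \bigr)
\end{equation*}
for $x \in B_{R/2}(x_0)$ and $0 < r \leq R/2$, where $\gamma := sp - \alpha(p-1) \in (0,n)$ (by the standing assumption $sp - \alpha(p-1) < n$) and $M_\gamma$ denotes the truncated fractional maximal function of order $\gamma$ applied to $f \chi_{B_R(x_0)}$. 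The smallness of the BMO threshold $\delta = \delta(n,s,p,\Lambda,\alpha)$ needed here is precisely the one supplied by Theorem \ref{thm:fracmaxest}.

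It then remains to bound $M_\gamma f$ in $L^\infty$. The classical H\"older inequality in Lorentz spaces gives $r^\gamma \dashint_{B_r(x)} |f(y)| dy \leq C \norm{f}_{L^{n/\gamma,\infty}(\mathbb{R}^n)}$, since $\norm{\chi_{B_r(x)}}_{L^{(n/\gamma)',1}} \simeq r^{n-\gamma}$. Taking the supremum over $r > 0$ yields the endpoint mapping $M_\gamma : L^{n/\gamma, \infty}(\mathbb{R}^n) \to L^\infty(\mathbb{R}^n)$, and combining this with the previous steps produces \eqref{eq:FineHoldest}; the local H\"older continuity \eqref{eq:fineHold} then follows by a standard covering argument.

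The heavy lifting is already performed by Theorem \ref{thm:fracmaxest}, so the work above is essentially packaging via well-known harmonic analysis. The sharpness of the borderline assumption $f \in L^{n/\gamma,\infty}$ rests on the fact that $L^{n/\gamma,\infty}$ is precisely the endpoint Lorentz space on which $M_\gamma$ is $L^\infty$-bounded; this is what distinguishes Theorem \ref{thm:fineHold} from its predecessor \eqref{eq:HHBLS}, which required an additional $\varepsilon$ of integrability precisely because earlier techniques could not reach this endpoint.
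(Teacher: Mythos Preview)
Your proposal is correct and follows essentially the same route as the paper: the paper also combines the Campanato-type characterization (Proposition~\ref{prop:sfm}) with the pointwise bound of Theorem~\ref{thm:fracmaxest} and the endpoint $L^{n/\gamma,\infty}\to L^\infty$ mapping of the fractional maximal function (Proposition~\ref{lem:Linfbd}), and then passes to general domains via scaling and covering. The only places where the paper is slightly more explicit are the scaling step that produces the dependence of $C$ on $R_0$ and $\max\{\textnormal{diam}(\Omega),1\}$, and the comparison of $E(u;x,\cdot)$ at varying centers $x\in B_{R/2}(x_0)$ with $E(u;x_0,R)$, both of which are routine.
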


	Here $L^{\frac{n}{sp-\alpha(p-1)},\infty}(\Omega)$ denotes the standard Marcinkiewicz space of order $\frac{n}{sp-\alpha(p-1)}$, see Definition \ref{def:MS}. Since we have the proper inclusion $L^{\frac{n}{sp-\alpha(p-1)}}(\Omega) \subsetneq L^{\frac{n}{sp-\alpha(p-1)},\infty}(\Omega)$, Theorem \ref{thm:fineHold} can indeed be considered to be a regularity result of \emph{borderline flavour}.

	\begin{remark}[H\"older regularity under VMO coefficients] \normalfont
		In analogy to Remark \ref{rem:VMO}, Theorem \ref{thm:fineHold} clearly implies that if we require $A$ to be VMO in $\Omega$, then in the setting of Theorem \ref{thm:fineHold} the implication \eqref{eq:FineHoldest} holds for all $\alpha$ in the range $0< \alpha < \min \big \{\frac{sp}{p-1},1 \big \}$.
	\end{remark}

	\begin{remark}[Sharpness] \label{rem:sharp} \normalfont
		If $s \leq \frac{p-1}{p}$, then Theorem \ref{thm:fineHold} is sharp already in the case $A \equiv 1$, see \cite[Example 1.5]{BLS}.
	\end{remark}

	We also record the following pure higher differentiability result.
	\begin{corollary}[Pure higher differentiability] \label{cor:HD}
		Let $\Omega \subset \mathbb{R}^n$ be a bounded domain, $s \in (0,1)$, $p \in [2,\infty)$, $\Lambda \geq 1$ and assume that $A \in \mathcal{L}_0(\Lambda)$ is VMO in $\Omega$. 
		Then for any weak solution $u \in W^{s,p}(\Omega) \cap L^{p-1}_{sp}(\mathbb{R}^n)$
		of $
		(-\Delta )^s_{p,A} u = f \text{ in } \Omega,
		$
		we have the implication
		\begin{equation} \label{eq:HDVMO}
			f \in L^\frac{p}{p-1}(\Omega) \implies u \in W^{t,p}_{\loc}(\Omega) \text{ for any } t \in \left (s, \min \left \{\frac{sp}{p-1},1 \right \} \right ).
		\end{equation}
	\end{corollary}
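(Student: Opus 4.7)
The proof derives Corollary \ref{cor:HD} from Theorem \ref{thm:CZestBMO} (applied via Remark \ref{rem:VMO}, which extends the conclusion to the full open range of $t$ under VMO coefficients), followed by a fractional Sobolev-type embedding on bounded subdomains. The key algebraic observation is that solving $q^\star = p$ in the formula $q^\star = nq(p-1)/(n - (sp - t(p-1))q)$ from Theorem \ref{thm:CZestBMO} yields the natural ``dual'' exponent
\[
 q_\star(t) := \frac{np}{n(p-1) + p\bigl(sp - t(p-1)\bigr)},
\]
which satisfies $q_\star(t) \leq p/(p-1)$ for $t \leq sp/(p-1)$ and $q_\star(t) < n/(sp - t(p-1))$ always. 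Hence $q = q_\star(t)$ is the ``ideal'' admissible exponent to directly produce $W^{t,p}$-regularity, although the admissibility condition $q_\star(t) > 1$ may fail when $sp$ is large relative to $n$.

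To handle all regimes uniformly, fix $t \in (s, \min\{sp/(p-1), 1\})$, pick an auxiliary $t' \in (t, \min\{sp/(p-1), 1\})$, and choose
\[
 q \in \bigl(1,\, p/(p-1)\bigr] \cap \bigl(1,\, n/(sp - t'(p-1))\bigr) \qquad \text{with} \qquad q \geq q_\star(t').
\]
Non-emptiness of this admissible set follows from $q_\star(t') \leq p/(p-1)$ together with $n/(sp-t'(p-1)) > 1$, the latter being equivalent to $n > s$; in the regime $q_\star(t') \leq 1$ one simply takes any $q > 1$ in the range. By construction $q \geq q_\star(t')$ forces $q^\star \geq p$, and $q \leq p/(p-1)$ combined with the boundedness of $\Omega$ yields $f \in L^q(\Omega)$ via H\"older's inequality. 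Remark \ref{rem:VMO} applied with parameters $(t', q)$ then produces $u \in W^{t', q^\star}_{\loc}(\Omega)$.

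The final step is the embedding $W^{t', q^\star}(\Omega') \hookrightarrow W^{t, p}(\Omega')$ on every bounded Lipschitz subdomain $\Omega' \Subset \Omega$. The integrability inclusion $L^{q^\star}(\Omega') \subset L^p(\Omega')$ is H\"older. For the Gagliardo seminorm, H\"older in $\Omega' \times \Omega'$ with conjugate exponents $q^\star/p$ and $q^\star/(q^\star - p)$ applied to the factorization
\[
 \frac{|u(x)-u(y)|^p}{|x-y|^{n+tp}} = \Bigl(\frac{|u(x)-u(y)|^{q^\star}}{|x-y|^{n+t'q^\star}}\Bigr)^{p/q^\star} |x-y|^{(t'-t)p - n(q^\star-p)/q^\star}
\]
reduces the bound to the finiteness of $\int\!\int_{\Omega'\times\Omega'} |x-y|^{(t'-t) p q^\star/(q^\star-p) - n}\, dx\, dy$, which holds thanks to $t' > t$. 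The boundary case $q^\star = p$ is handled directly by $|x-y|^{-(n+tp)} \leq \textnormal{diam}(\Omega')^{(t'-t)p}\, |x-y|^{-(n+t'p)}$. This yields $u \in W^{t, p}_{\loc}(\Omega)$ as claimed.

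The main obstacle is essentially administrative: verifying that a valid $q$ exists in every parameter regime, particularly when $q_\star(t)$ drops to or below $1$ (as can happen for $sp$ large compared to $n$). The trick of introducing $t' > t$ and descending via embedding circumvents the need to hit $q^\star = p$ on the nose and decouples the argument into a clean application of Theorem \ref{thm:CZestBMO} followed by a fractional Sobolev embedding.
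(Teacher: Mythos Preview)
Your proof is correct and follows essentially the same route as the paper: fix $t'\in(t,\min\{sp/(p-1),1\})$, apply Theorem~\ref{thm:CZestBMO} (via Remark~\ref{rem:VMO}) at level $t'$ to land in $W^{t',q^\star}_{\loc}$ with $q^\star\geq p$, then embed down to $W^{t,p}_{\loc}$ using $t'>t$. The paper simply takes $q=p/(p-1)$ and cites an external embedding result, whereas you are more explicit about choosing $q$ so that the admissibility window $(1,n/(sp-t'(p-1)))$ is respected in all parameter regimes and you write out the H\"older-based embedding by hand; the only slip is calling $n/(sp-t'(p-1))>1$ ``equivalent'' to $n>s$ when it is merely implied by it (since $sp-t'(p-1)<s<1\leq n$).
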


	As mentioned, all of the above regularity results are in fact consequences of precise pointwise estimates in terms of certain \emph{nonlocal fractional sharp maximal functions}, see Section \ref{sec:pmf} and in particular Theorem \ref{thm:fracmaxest}.
	
	\subsection{Differential stability effect and related results} \label{sec:pr}
	Since formally nonlocal operators of the type \eqref{eq:splap} converge to corresponding local elliptic operators as $s \to 1$ (see \cite{BBM1,FKV} for some rigorous results in this direction), our main results can be seen as counterparts to Calder\'on-Zygmund-type estimates for local equations of $p$-Laplacian-type obtained in e.g.\ \cite{Iwaniec,caffaP,KinnunenZhou,DM2,Cianchi,KMUniversal,CMJEMS,BDM20,DFJMPA,BDGP}. \par
	It is noteworthy that from the point of view of such local second-order elliptic equations, the differentiability gain in Theorem \ref{thm:CZestBMO} and Corollary \ref{cor:HD} in the presence of such general coefficients can be considered to be somewhat surprising. In fact, already in the case of linear second-order elliptic equations $\text{div}(B \nabla u)=f$, no differentiability gain at all is attainable without imposing a corresponding amount of differentiability on the coefficients, see \cite[Section 1]{KMS1}. In contrast, Theorem \ref{thm:CZestBMO} and Corollary \ref{cor:HD} yield a substantial amount of higher differentiability already in the presence of possibly discontinuous coefficients of VMO-type.
	
	Such \emph{nonlocal differential stability effects} were first observed in the papers \cite{KMS1,SchikorraMA}, where it was proved that in our nonlocal setting a slight gain of differentiability is already possible under merely bounded measurable coefficients. In the linear case, the subsequent works \cite{MSY,FMSYPDEA,MeV,MeI} then demonstrated that this effect becomes enhanced if one additionally imposes continuity or VMO assumptions on $A$. Our main results show that these enhanced nonlocal differential stability effects persist even in the nonlinear and degenerate setting of fractional $p$-Laplacian-type equations.
	More precisely, as hinted at in Section \ref{sec:overview}, in the linear case when $p=2$ Theorem \ref{thm:CZestBMO} recovers corresponding estimates from \cite[Theorem 1.2]{MeI} and extends them to the nonlinear case when $p>2$, in which case Theorem \ref{thm:CZestBMO} is a completely new result already in the case $A \equiv 1$ of the standard fractional $p$-Laplacian. 
	
	Moreover, our higher H\"older regularity result Theorem \ref{thm:fineHold} is already new in the linear case when $p=2$, where it sharpens \cite[Theorem 1.4]{MeV} as well as \cite[Theorem 1.3]{FallDCDS}. In fact, in \cite[Theorem 1.4]{MeV}, the slightly stronger assumption that $f \in L^{\frac{n}{2s-\alpha}}(\Omega)$ instead of $f \in L^{\frac{n}{2s-\alpha},\infty}(\Omega)$ was required to conclude that $u \in C^\alpha_{\loc}(\Omega)$, while in \cite[Theorem 1.3]{FallDCDS} the implication \eqref{eq:fineHold} for $p=2$ was proved under the stronger assumption that $A$ is continuous.
	
	As indicated in Section \ref{sec:overview}, in the case $A \equiv 1$ of the standard fractional $p$-Laplacian, Theorem \ref{thm:fineHold} also sharpens the higher H\"older regularity result \cite[Theorem 1.4]{BLS}, in which $f \in L^{\frac{n}{sp-\alpha(p-1)}+\varepsilon}(\Omega)$ for some $\varepsilon>0$ was required in order to conclude that $u \in C^\alpha_{\loc}(\Omega).$ In addition, Corollary \ref{cor:HD} improves the higher differentiability result \cite[Theorem 1.5]{BL} in the case when $s \leq \frac{p-1}{p}$, where $f \in W^{s,\frac{p}{p-1}}(\Omega)$ instead of $f \in L^\frac{p}{p-1}(\Omega)$ was assumed to obtain higher differentiability in the range $t \in \big (s, \frac{sp}{p-1}\big )$.
	
	Furthermore, in the case of general bounded measurable coefficients $A \in \mathcal{L}_0(\Lambda)$, in \cite{DKP,CozziJFA} it was proved by De Giorgi-Nash-Moser-type methods that weak solutions to $(-\Delta)^s_{p,A} u=0$ are locally H\"older continuous with respect to some small not explicitly quantified exponent. Under the mild additional assumption that $A$ is small in BMO, Theorem \ref{thm:fineHold} improves this unspecified H\"older exponent to an explicit one, which as discussed in Remark \ref{rem:sharp} is sharp in the case when $s \leq \frac{p-1}{p}$.
	
	Without being exhaustive at all, we note that more related results concerning H\"older regularity for various kinds of nonlocal equations can for instance be found in \cite{Silvestre,KassCalcVar,CCV,CSARMA,BCF,DKP2,BP,DFPJDE,FallCalcVar,MeH,CKWCalcVar,BKJMA}. Moreover, some further results concerning Sobolev regularity were for example proved in \cite{DongKim,Grubb,CozziSob,NowakNA,MengeshaScott,ma_yang_2022,DLCVPDE}.
	
	\subsection{Some open questions} \label{sec:op}
	In order to facilitate further research concerning nonlocal equations of fractional $p$-Laplacian-type, let us discuss some natural open questions related to our main results.
	
	\textbf{Gradient estimates}: In the case when $s > \frac{p-1}{p}$, in view of the classical $C^{1,\alpha}$ estimates for the local $p$-Laplacian (see e.g.\ \cite{NU68,KU77}) one might expect $(s,p)$-harmonic functions, that is, weak solutions to \eqref{eq:splapeq} in the case when $A \equiv 1$ and $f=0$, to have H\"older continuous gradients, which is so far unknown. 

	In case of a positive breakthrough in this direction, further natural questions would for example be to which extend such gradient estimates remain valid in the presence of a general right-hand side and under more general coefficients such as H\"older continuous ones. A question of particular interest in this context would concern the possibility of obtaining fine pointwise gradient estimates as it was accomplished in \cite{KuMiARMA1,KuMi} for the local $p$-Laplacian in terms of Riesz potentials of the data and in \cite{KMUniversal,DKSBMO,BCDKS} in terms of sharp maximal functions. We note that such gradient potential estimates were recently established in \cite{KNS} also for a large class of linear nonlocal equations.
	
	\textbf{Boundary regularity}: While in the case of the local $p$-Laplacian the interior regularity in most cases remains valid up to the boundary if the domain is regular enough (see e.g.\ \cite{ByunWang07,BCDS,BBDL}), the boundary regularity in the nonlocal case is rather restricted already in the case of the fractional Laplacian, see \cite{RSJMPA,AFLY}. Moreover, for the fractional $p$-Laplacian so far only global H\"older regularity with respect to some small unspecified exponent seems to be known, see \cite{IMS}. Thus, a natural question is to which degree our main results remain valid up to the boundary.
	
	\textbf{Parabolic equations}: Another naturally arising question is if versions of our main results can be proved for parabolic counterparts of the equation \eqref{eq:splapeq} as considered in e.g.\ \cite{MRT,VazquezNA,BLS21,DZSCVPDE,Liao,AT23}. As in the case of the local parabolic $p$-Laplacian (see e.g.\ \cite{AcerbiMingione}), this would entail handling the anisotropic structure of the equation, which is usually accomplished by considering intrinsic space/time cylinders whose size depends on the solution itself. 
	
	\section{Preliminaries}
	
	\subsection{Notation} \label{notation}
	For the sake of convenience, let us fix some general notation which we use throughout the paper. By $C$ we denote a general positive constant which possibly varies from line to line and only depends on the parameters indicated in the statement to be proved unless specified otherwise.
	As usual, by
	$$ B_r(x_0):= \{x \in \mathbb{R}^n \mid |x-x_0|<r \}$$
	we denote the open euclidean ball with center $x_0 \in \mathbb{R}^n$ and radius $r>0$. \par 
	Furthermore, if $E \subset \mathbb{R}^n$ is measurable, then by $|E|$ we denote the $n$-dimensional Lebesgue-measure of $E$. If $0<|E|<\infty$, then for any $u \in L^1(E)$ we define
	$$ (u)_E:= \dashint_{E} u(x)dx := \frac{1}{|E|} \int_{E} u(x)dx.$$
	
\subsection{$W^{s,p}$ spaces} \label{fracSob1}
\begin{definition}
	Let $\Omega \subset \mathbb{R}^n$ be a domain. For $p \in [1,\infty)$ and $s \in (0,1)$, we define the fractional Sobolev space
	$$W^{s,p}(\Omega):=\left \{u \in L^p(\Omega) \mid [u]_{W^{s,p}(\Omega)}<\infty \right \}$$
	with norm
	$$ \norm{u}_{W^{s,p}(\Omega)} := \left (\norm{u}_{L^p(\Omega)}^p + [u]_{W^{s,p}(\Omega)}^p \right )^{1/p} ,$$
	where
	$$ [u]_{W^{s,p}(\Omega)}:=\left (\int_{\Omega} \int_{\Omega} \frac{|u(x)-u(y)|^p}{|x-y|^{n+sp}}dydx \right )^{1/p} .$$
	Moreover, we define the corresponding local fractional Sobolev spaces by
	$$ W^{s,p}_{\loc}(\Omega):= \left \{ u \in L^p_{\loc}(\Omega) \mid u \in W^{s,p}(\Omega^\prime) \text{ for any domain } \Omega^\prime \Subset \Omega \right \}.$$
	Also, we define the space 
	$$W^{s,p}_0(\Omega):= \left \{u \in W^{s,p}(\mathbb{R}^n) \mid u = 0 \text{ in } \mathbb{R}^n \setminus \Omega \right \}.$$
\end{definition}

\subsection{Fractional maximal functions} \label{Cald}

\begin{definition}
Let $x_0 \in \mathbb{R}^n$. Given $g \in L^1_{\loc}(\mathbb{R}^n)$, we define the fractional maximal function of order $\beta \in [0,n]$ by
	\begin{equation}
		{M}_{\beta} (g)(x_0):= \sup_{r>0} \, r^\beta \dashint_{B_r(x_0)} |g|dx.
	\end{equation}
Moreover, given $R>0$ and $g \in L^1(B_R(x_0))$, we also consider the localized fractional maximal function of order $\beta \in [0,n]$ by
\begin{equation}
	{M}_{R,\beta} (g)(x_0):= \sup_{0<r\leq R} \, r^\beta \dashint_{B_r(x_0)} |g|dx.
\end{equation}
\end{definition}

The fractional maximal function has the following well-known mapping properties in $L^q$ spaces, see e.g.\ \cite{AdamsDuke,KinSak}.
\begin{proposition} \label{prop:fracmaxest}
	Let $\beta \in [0,n)$, $q \in (1,n/\beta)$ and $g \in L^q(\mathbb{R}^n)$. Then we have the estimate
	$$ \norm{{M}_{\beta} (g)}_{L^\frac{nq}{n-\beta q}(\mathbb{R}^n)} \leq C \norm{g}_{L^q(\mathbb{R}^n)},$$
	where $C$ depends only on $n,\beta,q$.
\end{proposition}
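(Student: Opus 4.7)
The plan is to establish a Hedberg-type pointwise interpolation inequality relating $M_\beta g$ to the classical Hardy-Littlewood maximal function $Mg$ and the Lebesgue norm $\|g\|_{L^q(\mathbb{R}^n)}$, and then deduce the claimed bound from the standard Hardy-Littlewood maximal theorem.

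First I would estimate the quantity $r^\beta \dashint_{B_r(x_0)} |g|\,dy$ in two complementary ways. On the one hand, the average is trivially bounded by $Mg(x_0)$, giving the small-$r$ useful bound $r^\beta \dashint_{B_r(x_0)} |g|\,dy \leq r^\beta\, Mg(x_0)$. On the other hand, H\"older's inequality gives $\dashint_{B_r(x_0)} |g|\,dy \leq C\, r^{-n/q} \|g\|_{L^q(\mathbb{R}^n)}$ and hence $r^\beta \dashint_{B_r(x_0)} |g|\,dy \leq C\, r^{\beta - n/q}\, \|g\|_{L^q(\mathbb{R}^n)}$; since the hypothesis $q < n/\beta$ forces $\beta - n/q < 0$, this second bound is useful precisely for large $r$.

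Next I would balance the two bounds by choosing $r = (\|g\|_{L^q(\mathbb{R}^n)}/Mg(x_0))^{q/n}$, the case $Mg(x_0)=0$ being trivial since then $g \equiv 0$ almost everywhere. A short computation shows that both bounds coincide at this value of $r$, yielding the pointwise Hedberg-type estimate
$$ M_\beta g(x_0) \leq C\, Mg(x_0)^{1 - \beta q/n}\, \|g\|_{L^q(\mathbb{R}^n)}^{\beta q/n}. $$
Setting $p^\ast := nq/(n-\beta q)$, one verifies the key exponent identity $(1-\beta q/n)\,p^\ast = q$, so raising this pointwise estimate to the $p^\ast$-th power and integrating over $\mathbb{R}^n$ reduces the matter to the classical Hardy-Littlewood inequality $\|Mg\|_{L^q(\mathbb{R}^n)} \leq C\, \|g\|_{L^q(\mathbb{R}^n)}$, which holds because $q > 1$.

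There is no real obstacle here, as the result is classical; the only subtlety is bookkeeping the range conditions. The assumption $q < n/\beta$ is exactly what makes the H\"older bound decay in $r$ and therefore permits the balancing step, while $q > 1$ is precisely what is needed to invoke the Hardy-Littlewood maximal theorem. The degenerate case $\beta = 0$ is already the Hardy-Littlewood theorem itself, with the Hedberg argument collapsing trivially.
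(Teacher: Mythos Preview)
Your argument is correct: the Hedberg-type pointwise inequality $M_\beta g(x_0)\le C\,(Mg(x_0))^{1-\beta q/n}\|g\|_{L^q}^{\beta q/n}$ obtained by balancing the two elementary bounds, followed by the Hardy--Littlewood maximal theorem (using $q>1$), is the standard route and yields the stated estimate with the correct dependence of $C$ on $n,\beta,q$.

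There is nothing to compare against: the paper does not give its own proof of this proposition but simply cites it from the literature (Adams; Kinnunen--Saksman). Your Hedberg argument is precisely one of the classical proofs behind those references.
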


In order to formulate the sharp mapping properties of the fractional maximal function with respect to $L^\infty$, we need to recall the following definition of Marcinkiewicz spaces which also appear in our higher H\"older regularity result Theorem \ref{thm:fineHold}.
\begin{definition} \label{def:MS}
	For any domain $\Omega \subset \mathbb{R}^n$ and any $q \in [1,\infty)$, we define the \emph{Marcinkiewicz space} $L^{q,\infty}(\Omega)$ as the space of measurable functions $f: \Omega \to \mathbb{R}^n$ such that the quasinorm
	$$ \norm{f}_{L^{q,\infty}(\Omega)}:=
		\sup_{t>0} t \hspace{0.2mm} | \{x \in \Omega \mid |f(x)| \geq t \} |^\frac{1}{q} $$
	is finite.
\end{definition}

In view of Chebychev's inequality, for any $q \in [1,\infty)$ we clearly have the continuous embedding
$L^{q}(\Omega) \hookrightarrow L^{q,\infty}(\Omega)$. Since for $x_0 \in \Omega$ the function $x \mapsto |x-x_0|^{-n/q}$ clearly belongs to $L^{q,\infty}(\Omega) \setminus L^{q}(\Omega)$, the inclusion is proper.

The well-known mapping properties of $\mathcal{M}_\beta$ with respect to $L^\infty$ are then given as follows, see e.g.\ \cite{KuMiG}. We include a simple proof for the sake of completeness.

\begin{proposition} \label{lem:Linfbd}
	Let $\beta \in (0,n)$ and $g \in L^{\frac{n}{\beta},\infty}(\mathbb{R}^n)$. Then we have the estimate
	\begin{equation} \label{eq:supfm}
	\sup_{x_0 \in \mathbb{R}^n} {M}_{\beta} (g)(x_0) \leq C \norm{g}_{L^{\frac{n}{\beta},\infty}(\mathbb{R}^n)},
	\end{equation}
	where $C$ depends only on $n,\beta$.
\end{proposition}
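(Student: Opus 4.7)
The plan is to bound $r^\beta \dashint_{B_r(x_0)} |g|\,dx$ uniformly in $r > 0$ and $x_0 \in \mathbb{R}^n$, by splitting the integral according to the size of $|g|$ using the layer cake formula, and then optimizing the splitting threshold. Concretely, for fixed $x_0$ and $r$, I would write
\begin{equation*}
\int_{B_r(x_0)} |g|\,dx \,=\, \int_0^\infty \big|\{x \in B_r(x_0) : |g(x)| > t\}\big|\,dt
\end{equation*}
and choose a parameter $T > 0$ at which to split this integral in two.

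For $t \leq T$ I would use the trivial bound $|\{x \in B_r(x_0) : |g(x)| > t\}| \leq |B_r(x_0)| = \omega_n r^n$, while for $t > T$ I would invoke the defining weak-type estimate from Definition \ref{def:MS}, namely $|\{x \in \mathbb{R}^n : |g(x)| > t\}| \leq \|g\|_{L^{n/\beta,\infty}}^{n/\beta}\, t^{-n/\beta}$. Since $\beta < n$, the exponent $n/\beta > 1$, so the resulting tail integral $\int_T^\infty t^{-n/\beta}\,dt$ converges and equals $\frac{\beta}{n-\beta} T^{1-n/\beta}$. Altogether this yields
\begin{equation*}
\int_{B_r(x_0)} |g|\,dx \,\leq\, \omega_n r^n T \,+\, \frac{\beta}{n-\beta}\, \|g\|_{L^{n/\beta,\infty}(\mathbb{R}^n)}^{n/\beta}\, T^{1-n/\beta}.
\end{equation*}

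Now I would optimize in $T$; the natural choice making the two terms comparable is $T = \|g\|_{L^{n/\beta,\infty}(\mathbb{R}^n)}\, r^{-\beta}$. Substituting this back gives a bound of the form
\begin{equation*}
\int_{B_r(x_0)} |g|\,dx \,\leq\, C(n,\beta)\, \|g\|_{L^{n/\beta,\infty}(\mathbb{R}^n)}\, r^{n-\beta},
\end{equation*}
and dividing by $|B_r(x_0)| = \omega_n r^n$ and multiplying by $r^\beta$ yields $r^\beta \dashint_{B_r(x_0)} |g|\,dx \leq C \|g\|_{L^{n/\beta,\infty}(\mathbb{R}^n)}$ with a constant depending only on $n$ and $\beta$. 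Taking the supremum over $r > 0$ and then over $x_0 \in \mathbb{R}^n$ yields \eqref{eq:supfm}.

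There is no real obstacle here; the argument is a textbook calculation, and the only minor point to watch is the integrability condition $n/\beta > 1$, which is precisely why the statement excludes $\beta = n$. The same splitting method is of course the standard route from weak-type information to bounds on integral averages.
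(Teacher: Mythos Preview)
Your proposal is correct and takes essentially the same approach as the paper: both use the layer cake formula and split the distribution function at a threshold, bounding the low part trivially by $|B_r(x_0)|$ and the high part by the weak-$L^{n/\beta}$ estimate. The paper normalizes $g$ to have unit Marcinkiewicz norm first and then splits at $\lambda = r^{-\beta}$, which is exactly your optimized threshold $T = \|g\|_{L^{n/\beta,\infty}} r^{-\beta}$ after normalization; the resulting constant $1 + \frac{\beta}{\omega_n(n-\beta)}$ coincides. The only cosmetic point is that the paper treats the trivial case $g \equiv 0$ separately, which you should also do since your choice of $T$ degenerates to zero there.
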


\begin{proof}
	If $g \equiv 0$ a.e.\ in $\mathbb{R}^n$, then \eqref{eq:supfm} is trivially satisfied, so that we can assume that $\norm{g}_{L^{\frac{n}{\beta},\infty}(\mathbb{R}^n)}>0$.
	Next, set $\widehat g:= g/\norm{g}_{L^{\frac{n}{\beta},\infty}(\mathbb{R}^n)},$ so that $\norm{\widehat g}_{L^{\frac{n}{\beta},\infty}(\mathbb{R}^n)} =1$.
	
	Fix some $x_0 \in \mathbb{R}^n$ and denote by $\omega_n$ the Lebesgue measure of the $n$-dimensional unit ball. Using Cavalieri's principle, for any $r>0$ we estimate
	\begin{align*}
		 r^\beta \dashint_{B_r(x_0)} |\widehat g|dx & = r^{\beta} |B_r(x_0)|^{-1} \int_0^\infty | \{x \in B_r(x_0) \mid |\widehat g(x)|>\lambda \} | d\lambda \\
		 & \leq r^{\beta} \int_0^{r^{-\beta}} d\lambda
		 + \omega_n^{-1} r^{\beta-n} \int_{r^{-\beta}}^\infty | \{x \in B_r(x_0) \mid |\widehat g(x)|>\lambda \} | d\lambda \\
		 & \leq 1 + \norm{\widehat g}_{L^{\frac{n}{\beta},\infty}(\mathbb{R}^n)}^{n/\beta} \omega_n^{-1} r^{\beta-n} \int_{r^{-\beta}}^\infty \lambda^{-\frac{n}{\beta}} d\lambda = 1 + \frac{\beta}{\omega_n(n-\beta)},
	\end{align*}
	which after rescaling yields the desired estimate with respect to $C=1 + \frac{\beta}{\omega_n(n-\beta)}$.
\end{proof}

In order to control the oscillations rather than the size of functions, we also use the following notion of fractional sharp maximal functions considered e.g.\ in \cite{DeVoreSharpley}.

\begin{definition}[Fractional sharp maximal function]
	Given $x_0 \in \mathbb{R}^n$, $R>0$ and $g \in L^{1}(B_R(x_0))$, we define the fractional sharp maximal function of order $\beta \in [0,1]$ by
	\begin{equation}
	{M}^\#_{R,\beta} (g)(x_0):= \sup_{0<r\leq R} \, r^{-\beta} \dashint_{B_r(x_0)} |g-(g)_{B_r(x_0)}|dx.
	\end{equation}
\end{definition}

The following estimate follows from \cite[Proposition 1]{KuMiG}.

\begin{proposition} \label{prop:sfm}
	Let $R>0$ and $g \in L^{1}(B_{2R})$. Then for any $\beta \in (0,1]$ and all $x,y \in B_{R/4}$, we have the inequality
	$$ |g(x)-g(y)| \leq \frac{C}{\beta} \left [{M}^\#_{R,\beta}(g)(x) + {M}^\#_{R,\beta}(g)(y) \right ] |x-y|^\beta,$$
	where $C$ depends only on $n$.
\end{proposition}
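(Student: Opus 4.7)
The plan is to use a Campanato-style telescoping argument over dyadically shrinking concentric balls. Fix $x,y\in B_{R/4}$ and set $r:=|x-y|$, so that $r<R/2$; in particular the ``bridging'' ball $B_\ast:=B_{2r}(x)$ contains $B_r(y)$ (since $|x-y|=r$), and the radii $2^{1-k}r$ and $2r$ that appear below are all $\leq R$, hence admissible in the definition of $M^\#_{R,\beta}$ at either center.

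First I would split
$$|g(x)-g(y)| \leq |g(x)-(g)_{B_\ast}| + |(g)_{B_\ast}-(g)_{B_r(y)}| + |(g)_{B_r(y)}-g(y)|,$$
and bound the middle term directly from nesting: since $|B_\ast|/|B_r(y)|=2^n$,
$$|(g)_{B_\ast}-(g)_{B_r(y)}| \leq \dashint_{B_r(y)}|g-(g)_{B_\ast}|\,dz \leq 2^n \dashint_{B_\ast}|g-(g)_{B_\ast}|\,dz \leq 2^n(2r)^\beta M^\#_{R,\beta}(g)(x).$$

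For the first and third terms I would run the usual dyadic telescoping. Setting $B^x_k:=B_{2^{1-k}r}(x)$ for $k\geq 0$, so that $B^x_0=B_\ast$ and $B^x_k\searrow\{x\}$, one has
$$|(g)_{B^x_{k+1}}-(g)_{B^x_k}| \leq 2^n \dashint_{B^x_k}|g-(g)_{B^x_k}|\,dz \leq 2^n(2^{1-k}r)^\beta M^\#_{R,\beta}(g)(x).$$
Telescoping and using $(g)_{B^x_k}\to g(x)$ at Lebesgue points of $g$ (which suffices for the almost-everywhere statement; the general case follows by passing to the precise representative), I obtain
$$|g(x)-(g)_{B_\ast}| \leq 2^n r^\beta M^\#_{R,\beta}(g)(x)\sum_{k=0}^\infty 2^{(1-k)\beta} = \frac{2^{n+\beta}}{1-2^{-\beta}}\,r^\beta M^\#_{R,\beta}(g)(x),$$
and the symmetric telescoping starting from $B_r(y)$ and shrinking down to $y$ yields the analogous bound with $M^\#_{R,\beta}(g)(y)$.

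Combining the three bounds via the triangle inequality produces the desired inequality, with the factor $1/\beta$ coming from the elementary estimate $1-2^{-\beta}\geq c\beta$ valid for all $\beta\in(0,1]$. This last estimate is the only place where $\beta$ enters the constant, and it is immediate from the convexity of $\beta\mapsto 2^{-\beta}$ (or the expansion $1-2^{-\beta}=(\log 2)\beta+O(\beta^2)$). No genuine obstacle arises: the argument is essentially careful bookkeeping of averages on nested balls together with this one elementary estimate, and the only point that requires attention is confirming at the outset that every radius used stays within the admissible range $(0,R]$, which is automatic from $x,y\in B_{R/4}$ and $r\leq R/2$.
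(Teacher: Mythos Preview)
Your argument is correct and is precisely the standard Campanato telescoping proof of this pointwise estimate. The paper does not supply its own proof of this proposition but simply cites \cite[Proposition 1]{KuMiG}, where the same dyadic telescoping approach is used; so your proposal is in line with the intended argument.
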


The following consequence of Proposition \ref{prop:sfm} allows us to obtain estimates in fractional Sobolev spaces $W^{s,p}$ by proving appropriate estimates in terms of fractional sharp maximal functions.

\begin{proposition} \label{prop:embedding}
	Let $R>0$, $1 < \widetilde q < q < \infty$ and $s,\widetilde s \in (0,1)$. If $ s - n/p = \widetilde s - n/\widetilde p, $ then for any $g \in L^{1}(B_{2R})$ we have the estimates
	$$ \norm{g}_{W^{s,q}(B_{R/8})} \leq \widetilde C \norm{g}_{N^{\widetilde s,\widetilde q}(B_{R/8})} \leq C \left [ \norm{g}_{L^{\widetilde q}(B_{R/8})} + \norm{{M}^\#_{R,\widetilde s}(g)}_{L^{\widetilde q}(B_R)} \right ],$$
	where $C$ and $\widetilde C$ depend only on $n,s,\widetilde s,q,\widetilde q,R$ and the Nikolskii norm $\norm{g}_{N^{\widetilde s,\widetilde q}(B_{R/8})}$ is defined by 
	\begin{equation} \label{eq:Nikolskii}
	\norm{g}_{N^{\widetilde s,\widetilde q}(B_{R/8})} := \norm{g}_{L^{\widetilde q}(B_{R/8})} + \sup_{0<|h|<R/8} \left (\int_{B_{R/8}} \frac{|g(x+h)-g(x)|^{\widetilde q}}{|h|^{\widetilde s \widetilde q}}dx \right )^\frac{1}{\widetilde q}.
	\end{equation}
\end{proposition}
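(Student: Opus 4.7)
The plan is to establish the two inequalities separately. The right-hand inequality is elementary and relies directly on the pointwise bound of Proposition~\ref{prop:sfm}, while the left-hand inequality is a classical Besov-type embedding transferred to a bounded domain.

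For the right-hand inequality, I would apply Proposition~\ref{prop:sfm} pointwise. Fix any $h$ with $0<|h|<R/8$ and any $x\in B_{R/8}$; then both $x$ and $x+h$ lie in $B_{R/4}$, so Proposition~\ref{prop:sfm} yields
\begin{equation*}
\frac{|g(x+h)-g(x)|}{|h|^{\widetilde s}}\leq \frac{C}{\widetilde s}\bigl(M^\#_{R,\widetilde s}(g)(x)+M^\#_{R,\widetilde s}(g)(x+h)\bigr).
\end{equation*}
Raising to the power $\widetilde q$ and integrating in $x$ over $B_{R/8}$, then using translation invariance of Lebesgue measure in the second summand (together with $x+h\in B_{R/4}\subset B_R$), and finally taking the supremum over $h$ with $0<|h|<R/8$ controls the seminorm part of the Nikolskii norm. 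Adding $\norm{g}_{L^{\widetilde q}(B_{R/8})}$ completes the full bound.

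For the left-hand inequality, I would observe that $N^{\widetilde s,\widetilde q}$ coincides with the Besov space $B^{\widetilde s}_{\widetilde q,\infty}$, whereas for non-integer $s\in(0,1)$ the Sobolev--Slobodeckij space $W^{s,q}$ coincides with $B^{s}_{q,q}$. Since $\widetilde q<q$ and $\widetilde s-n/\widetilde q=s-n/q$ (which in particular forces $\widetilde s>s$), one is in the range of the sharp Franke--Jawerth-type Besov embedding
\begin{equation*}
B^{\widetilde s}_{\widetilde q,\infty}(\mathbb{R}^n)\hookrightarrow B^{s}_{q,q}(\mathbb{R}^n),
\end{equation*}
which I would invoke from Triebel's monograph on function spaces. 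To transfer this embedding to the ball, one first extends $g$ to $\mathbb{R}^n$ with controlled Nikolskii norm via a standard extension operator on the Lipschitz domain $B_{R/8}$, then applies the global embedding, and finally restricts back; this accounts for the $R$-dependence of $\widetilde C$ and $C$.

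The main obstacle is the left-hand embedding: it is the endpoint Franke--Jawerth embedding in the Besov scale, whose standard proof relies on Littlewood--Paley theory or atomic decompositions and is outside the scope of the rest of this paper. I would therefore cite a standard reference rather than reproving it. The work specific to the present setting is only the extension-and-restriction argument to the bounded domain, which is routine, together with the elementary argument above for the right-hand inequality.
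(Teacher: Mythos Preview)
Your proposal is correct and follows essentially the same route as the paper. The right-hand inequality is proved identically via Proposition~\ref{prop:sfm}; for the left-hand inequality the paper simply cites \cite[Remark~2 and Proposition~4]{CozziSob} for the local embedding $N^{\widetilde s,\widetilde q}(B_{R/8})\hookrightarrow W^{s,q}(B_{R/8})$ directly, whereas you reach the same conclusion by invoking the global Franke--Jawerth embedding and an extension/restriction argument---a slightly longer but equally valid path.
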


\begin{proof}
	From Proposition \ref{prop:sfm} it follows that
	\begin{align*}
	& \sup_{0<|h|<R/8} \left (\int_{B_{R/8}} \frac{|g(x+h)-g(x)|^{\widetilde q}}{|h|^{\widetilde s \widetilde q}}dx \right )^\frac{1}{\widetilde q} \\ & \leq C \left [ \sup_{0<|h|<R/8} \left (\int_{B_{R/8}} |{M}^\#_{R,\widetilde s}(g)(x+h)|^{\widetilde q}dx \right )^\frac{1}{\widetilde q} + \left (\int_{B_{R/8}} |{M}^\#_{R,\widetilde s}(g)(x)|^{\widetilde q}dx \right )^\frac{1}{\widetilde q} \right ] \\
	& \leq C \norm{{M}^\#_{R,\widetilde s}(g)}_{L^{\widetilde q}(B_{R})}. 
	\end{align*}
	Since e.g.\ by \cite[Remark 2]{CozziSob} \eqref{eq:Nikolskii} indeed gives rise to an equivalent norm in the standard Nikolskii space $N^{\widetilde s,\widetilde q}(B_{R/8})$ and by \cite[Proposition 4]{CozziSob} we have the continuous embedding $N^{\widetilde s,\widetilde q}(B_{R/8}) \hookrightarrow W^{s,q}(B_{R/8}),$ we conclude that
	\begin{align*}
		\norm{g}_{W^{s,q}(B_{R/8})} \leq \widetilde C \norm{g}_{N^{\widetilde s,\widetilde q}(B_{R/8})} \leq C \left [ \norm{g}_{L^{\widetilde q}(B_{R/8})} + \norm{{M}^\#_{R,\widetilde s}(g)}_{L^{\widetilde q}(B_R)} \right ],
	\end{align*}
	finishing the proof.
\end{proof}
	
	\subsection{Preliminary regularity results}
	
	\begin{remark} \label{rem:u-c} \normalfont
	Since our aim is to control the oscillations of solutions, a simple fact that we shall regularly use is that for any weak solution $u \in W^{s,p}(\Omega) \cap L^{p-1}_{sp}(\mathbb{R}^n)$ of $(-\Delta )^s_{p,A} u = f$ in a bounded domain $\Omega \subset \mathbb{R}^n$ and any constant $c \in \mathbb{R}$, the function $\widetilde u:=u-c \in W^{s,p}(\Omega) \cap L^{p-1}_{sp}(\mathbb{R}^n)$ is again a weak solution of $(-\Delta )^s_{p,A} \widetilde u = f$ in $\Omega$.
	\end{remark}

	\begin{remark} \label{rem:infc} \normalfont
	Another useful observation is that for all $R>0$, $x_0 \in \mathbb{R}^n$ and any $u \in L^{p-1}_{sp}(\mathbb{R}^n)$, the nonlocal excess $E(u;x_0,R)$ is comparable to the quantity
	$$ \widetilde E(u;x_0,R):= \inf_{c \in \mathbb{R}} \left [ \left (\dashint_{B_{R}(x_0)} |u-c|^{p-1} dx \right )^\frac{1}{p-1} + \textnormal{Tail}(u-c;x_0,R) \right ].$$
	In fact, in view of H\"older's inequality we clearly have
	$$ \widetilde E(u;x_0,R) \leq E(u;x_0,R) \leq C(n,s,p) \widetilde E(u;x_0,R).$$
	\end{remark}
	
	\subsubsection{Caccioppoli inequality}
	
	The following Caccioppoli-type inequality follows by combining \cite[Lemma 2.2]{KMS2} with Remark \ref{rem:u-c}.
	\begin{proposition} \label{prop:cacc}
		Let $s \in (0,1)$, $p \in [2,\infty)$, $R>0$, $x_0 \in \mathbb{R}^n$, $\Lambda \geq 1$ and fix some $A \in \mathcal{L}_0(\Lambda)$. 
		Moreover, assume that $u \in W^{s,p}(B_{R}(x_0)) \cap L^{p-1}_{sp}(\mathbb{R}^n)$ is a weak solution of the equation $(-\Delta )^s_{p,A} u = 0$ in $B_{R}(x_0)$. Then we have the estimate
		\begin{equation} \label{eq:cacc}
				\left (\dashint_{B_{R/2}(x_0)} \int_{B_{R/2}(x_0)} \frac{|u(x)-u(y)|^p}{|x-y|^{n+sp}} dydx \right )^\frac{1}{p} \leq C R^{-s} E(u;x_0,R),
		\end{equation}
		where $C=C(n,s,p,\Lambda)>0$.
	\end{proposition}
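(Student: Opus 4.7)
The plan is to reduce to the Caccioppoli inequality of \cite[Lemma 2.2]{KMS2}, which establishes the analogous estimate on $B_{R/2}(x_0)$ with the interior $L^{p-1}$-average and the nonlocal tail measured relative to an \emph{arbitrary} constant $c \in \mathbb{R}$.

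First I would observe that the left-hand side of \eqref{eq:cacc} depends only on the differences $u(x)-u(y)$, and is therefore invariant under the translation $u \mapsto \widetilde u := u - c$ for any $c \in \mathbb{R}$. By Remark \ref{rem:u-c}, $\widetilde u \in W^{s,p}(B_R(x_0)) \cap L^{p-1}_{sp}(\mathbb{R}^n)$ remains a weak solution of $(-\Delta)^s_{p,A} \widetilde u = 0$ in $B_R(x_0)$, so any Caccioppoli estimate available for $u$ applies verbatim to $\widetilde u$.

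Next I would specialize to $c := (u)_{B_R(x_0)}$ and apply \cite[Lemma 2.2]{KMS2} to $\widetilde u$ on the nested pair $B_{R/2}(x_0) \subset B_R(x_0)$. The right-hand side then reduces to $C R^{-s}$ times
$$
\left(\dashint_{B_R(x_0)} |u - (u)_{B_R(x_0)}|^{p-1} dx\right)^{\frac{1}{p-1}} + \textnormal{Tail}\bigl(u - (u)_{B_R(x_0)}; x_0, R\bigr),
$$
which by Definition \ref{def:excess} is exactly $E(u;x_0,R)$, yielding \eqref{eq:cacc}.

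The only delicate point is matching the precise form of the Caccioppoli inequality in \cite[Lemma 2.2]{KMS2} to the excess functional. Since KMS2's Caccioppoli is naturally phrased in the $(p-1)$-setting---dictated by the order of the nonlinearity and the exponent appearing in the tail---the reduction should be immediate; any minor cosmetic discrepancy could be absorbed using the equivalence $E(u;x_0,R) \sim \widetilde E(u;x_0,R)$ from Remark \ref{rem:infc}, after taking the infimum over $c \in \mathbb{R}$ in the intermediate step. I do not anticipate any substantive obstacle beyond this bookkeeping.
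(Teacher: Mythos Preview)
Your proposal is correct and matches the paper's own justification exactly: the proposition is stated as following from \cite[Lemma 2.2]{KMS2} combined with Remark~\ref{rem:u-c}, which is precisely the reduction you describe. No further argument is needed.
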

	
	\subsubsection{Local boundedness}
	
	The following result on local boundedness follows by combining \cite[Corollary 2.1]{KMS2} with Remark \ref{rem:u-c}.
	\begin{proposition} \label{prop:lb}
		Let $s \in (0,1)$, $p \in [2,\infty)$, $R>0$, $x_0 \in \mathbb{R}^n$, $\Lambda \geq 1$ and fix some $A \in \mathcal{L}_0(\Lambda)$. 
		Moreover, assume that $u \in W^{s,p}(B_{R}(x_0)) \cap L^{p-1}_{sp}(\mathbb{R}^n)$ is a weak solution of the equation $(-\Delta )^s_{p,A} u = 0$ in $B_{R}(x_0)$. Then we have the estimate
		\begin{equation} \label{eq:lb}
				\norm{u-(u)_{B_R(x_0)}}_{L^\infty(B_{R/2}(x_0))} \leq C E(u;x_0,R),
		\end{equation}
		where $C=C(n,s,p,\Lambda)>0$.
	\end{proposition}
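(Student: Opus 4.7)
The plan is to reduce this to the local boundedness estimate stated in \cite[Corollary 2.1]{KMS2}, which should give a bound of the form
\[
\norm{v}_{L^\infty(B_{R/2}(x_0))} \leq C\bigg[\bigg(\dashint_{B_R(x_0)} |v|^{p-1}dx\bigg)^{\frac{1}{p-1}} + \textnormal{Tail}(v;x_0,R)\bigg]
\]
for any weak solution $v \in W^{s,p}(B_R(x_0)) \cap L^{p-1}_{sp}(\mathbb{R}^n)$ of $(-\Delta)^s_{p,A} v = 0$ in $B_R(x_0)$. (If the cited result is instead stated with an $L^p$ average, one first reduces to the $L^{p-1}$ version by the standard self-improving interpolation trick, absorbing a factor $\norm{v}_{L^\infty(B_{R/2}(x_0))}^{1/p}$.)

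The key observation is now the translation-invariance noted in Remark \ref{rem:u-c}: the function $\widetilde u := u - (u)_{B_R(x_0)}$ lies in $W^{s,p}(B_R(x_0)) \cap L^{p-1}_{sp}(\mathbb{R}^n)$ (since subtracting a constant does not spoil membership in $L^{p-1}_{sp}(\mathbb{R}^n)$, the weight being integrable away from the origin) and satisfies the same homogeneous equation $(-\Delta)^s_{p,A} \widetilde u = 0$ in $B_R(x_0)$.

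Applying the cited estimate to $\widetilde u$ and using that $(\widetilde u)_{B_R(x_0)} = 0$ yields
\[
\norm{u - (u)_{B_R(x_0)}}_{L^\infty(B_{R/2}(x_0))} \leq C\bigg[\bigg(\dashint_{B_R(x_0)} |u - (u)_{B_R(x_0)}|^{p-1}dx\bigg)^{\frac{1}{p-1}} + \textnormal{Tail}(u - (u)_{B_R(x_0)};x_0,R)\bigg],
\]
and the right-hand side is exactly $C\,E(u;x_0,R)$ by Definition \ref{def:excess}.

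The main (very modest) obstacle is simply ensuring that the local boundedness result from \cite{KMS2} applies in the excess-centered form needed here; this is where Remark \ref{rem:u-c} does the essential work, converting an estimate in terms of $\norm{u}_{L^{p-1}}$ plus $\textnormal{Tail}(u;\cdot)$ into one in terms of the genuinely translation-invariant quantity $E(u;x_0,R)$. No further regularity or iteration argument is required.
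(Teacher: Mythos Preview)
Your proposal is correct and matches the paper's approach exactly: the paper states that the proposition ``follows by combining \cite[Corollary 2.1]{KMS2} with Remark \ref{rem:u-c}'', which is precisely the argument you wrote out. No additional ideas are needed.
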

	
	\subsubsection{Higher H\"older regularity under locally constant coefficients}
	
	The following result is concerned with higher H\"older regularity in the case when the coefficient $A$ is locally constant and is a rather straightforward consequence of the H\"older estimates for the standard fractional $p$-Laplacian from \cite{BLS}.
	\begin{proposition} \label{prop:Holdreg}
		Let $s \in (0,1)$, $p \in [2,\infty)$, $R>0$, $x_0 \in \mathbb{R}^n$, $\Lambda \geq 1$ and fix some $A \in \mathcal{L}_0(\Lambda)$. In addition, fix some constant $b \in [\Lambda^{-1},\Lambda]$ and consider the locally frozen coefficient 
		$$ \widetilde A (x,y) := \begin{cases} \normalfont
			b & \text{if } (x,y) \in B_{R}(x_0) \times B_{R}(x_0) \\
			A(x,y) & \text{if } (x,y) \notin B_{R}(x_0) \times B_{R}(x_0).
		\end{cases} $$
		Moreover, assume that $u \in W^{s,p}(B_{R}(x_0)) \cap L^{p-1}_{sp}(\mathbb{R}^n)$ is a weak solution of the equation $(-\Delta )^s_{p,\widetilde A} u = 0$ in $B_{R}(x_0)$. Then for any $0<\alpha<\min \big \{\frac{sp}{p-1},1 \big\}$ and any $\rho \in (0,1/4]$, we have the oscillation estimate
		\begin{equation} \label{eq:Holdest}
				\osc_{B_{\rho R}(x_0)} u \leq C \rho^\alpha E(u;x_0,R),
		\end{equation}
		where $C=C(n,s,p,\Lambda,\alpha)>0$.
	\end{proposition}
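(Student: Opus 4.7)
The plan is to reduce the problem to an inhomogeneous equation for the standard fractional $p$-Laplacian with a bounded right-hand side, and then directly invoke the higher Hölder estimates of Brasco--Lindgren--Schikorra from \cite{BLS}.

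First, using Remark \ref{rem:u-c} I may replace $u$ by $u - (u)_{B_R(x_0)}$ without altering the equation or the excess functional, so I assume $(u)_{B_R(x_0)} = 0$; Proposition \ref{prop:lb} then yields $\|u\|_{L^\infty(B_{R/2}(x_0))} \leq C\, E(u;x_0,R)$. Next I exploit the freezing: since $\widetilde A \equiv b$ on $B_R(x_0) \times B_R(x_0)$, for any $\varphi \in C_0^\infty(B_{R/2}(x_0))$ the bilinear form in the definition of weak solutions splits as
\[
b\int_{\mathbb{R}^n}\int_{\mathbb{R}^n}\frac{|u(x)-u(y)|^{p-2}(u(x)-u(y))(\varphi(x)-\varphi(y))}{|x-y|^{n+sp}}\,dy\,dx = -\!\int_{B_{R/2}(x_0)} h(x)\varphi(x)\,dx,
\]
where, after a standard symmetrization using $A(x,y)=A(y,x)$,
\[
h(x) := 2\int_{\mathbb{R}^n\setminus B_R(x_0)}\frac{(A(x,y)-b)\,|u(x)-u(y)|^{p-2}(u(x)-u(y))}{|x-y|^{n+sp}}\,dy.
\]
Thus $u$ is a weak solution of $b(-\Delta)^s_p u = h$ in $B_{R/2}(x_0)$. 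Since $|A(x,y)-b|\leq 2\Lambda$ and $|x-y|\geq \tfrac12|x_0-y|$ for $x\in B_{R/2}(x_0)$, $y\in \mathbb{R}^n\setminus B_R(x_0)$, a direct estimate splitting $|u(x)-u(y)|^{p-1}$ and using both Proposition \ref{prop:lb} and the definition of the nonlocal tail yields
\[
\|h\|_{L^\infty(B_{R/2}(x_0))} \leq C\, R^{-sp}\, E(u;x_0,R)^{p-1}.
\]

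Now I apply the higher Hölder regularity theorem of \cite{BLS} to $(-\Delta)^s_p u = h/b$ on $B_{R/2}(x_0)$: for every $\alpha\in\bigl(0,\min\{sp/(p-1),1\}\bigr)$, in its local quantitative form (with the natural $(s,p)$-scaling on a ball of radius $R/2$),
\[
[u]_{C^\alpha(B_{R/4}(x_0))} \leq C\,R^{-\alpha}\Bigl(\|u\|_{L^\infty(B_{R/2}(x_0))} + \mathrm{Tail}(u;x_0,R/2) + R^{sp/(p-1)}\|h/b\|_{L^\infty(B_{R/2}(x_0))}^{1/(p-1)}\Bigr).
\]
A short computation (splitting the tail integral at radius $R$) shows $\mathrm{Tail}(u;x_0,R/2)\leq C\,E(u;x_0,R)$ once $(u)_{B_R(x_0)}=0$, and the last term is controlled by $C\,R^{-\alpha}\,E(u;x_0,R)$ thanks to the bound on $h$. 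Combining everything yields $[u]_{C^\alpha(B_{R/4}(x_0))} \leq C\,R^{-\alpha}\,E(u;x_0,R)$. Finally, for any $\rho\in(0,1/4]$ and any $x,y\in B_{\rho R}(x_0)$ one has $|u(x)-u(y)|\leq (2\rho R)^\alpha [u]_{C^\alpha(B_{R/4}(x_0))}$, which gives the desired oscillation estimate $\osc_{B_{\rho R}(x_0)} u \leq C\,\rho^\alpha E(u;x_0,R)$.

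The main obstacle is pinpointing and citing the precisely quantified form of the \cite{BLS} Hölder estimate with the correct scaling in $R$, $\|h\|_{L^\infty}$, and the tail (since \cite{BLS} state several variants, and one must verify that the $L^\infty$-right-hand side version is applicable up to the full range $\alpha<\min\{sp/(p-1),1\}$). Once that quantitative estimate is in hand, everything else is straightforward algebra of scales and the elementary tail-vs-excess comparison.
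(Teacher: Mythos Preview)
Your proof is correct and follows essentially the same approach as the paper: reduce to an inhomogeneous fractional $p$-Laplace equation on $B_{R/2}(x_0)$ with bounded right-hand side coming from the tail region where $\widetilde A$ differs from $b$, bound this right-hand side via Proposition~\ref{prop:lb} and the tail, and then invoke \cite[Theorem~1.4]{BLS}. The only cosmetic issue is a sign slip (your displayed identity gives $b(-\Delta)^s_p u = -h$, not $+h$), which is irrelevant since only $\|h\|_{L^\infty}$ enters the estimate.
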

	
	\begin{proof}
		Taking into account Remark \ref{rem:u-c}, observe that $\widetilde u:=u-(u)_{B_R(x_0)}$ is a weak solution of the inhomogeneous fractional $p$-Laplace equation $(-\Delta )^s_{p} u = f$ in $B_{R/2}(x_0)$, where 
		$$f(x):= b^{-1} \int_{\mathbb{R}^n \setminus B_{R}(x_0)} (b-A(x,y)) \frac{|\widetilde u(x)-\widetilde u(y)|^{p-2}(\widetilde u(x)-\widetilde u(y))}{|x-y|^{n+sp}}dy$$
		belongs to $L^\infty(B_{R/2}(x_0))$. In fact, by Proposition \ref{prop:lb} we have
		\begin{align*}
			\norm{f}_{L^\infty(B_{R/2}(x_0))} & \leq C \int_{\mathbb{R}^n \setminus B_{R}(x_0)} \frac{\norm{\widetilde u}_{L^\infty(B_{R/2}(x_0))}^{p-1}+|\widetilde u(y)|^{p-1}}{|x_0-y|^{n+sp}}dy \\
			& = C \Bigg (R^{-sp}\norm{\widetilde u}_{L^\infty(B_{R/2}(x_0))}^{p-1} + \int_{\mathbb{R}^n \setminus B_{R}(x_0)} \frac{|\widetilde u(y)|^{p-1}}{|x_0-y|^{n+sp}}dy \Bigg ) \\
			& \leq C \Bigg (R^{-sp} \dashint_{B_{R}(x_0)} |\widetilde u|^{p-1}dx  + \int_{\mathbb{R}^n \setminus B_{R}(x_0)} \frac{|\widetilde u(y)|^{p-1}}{|x_0-y|^{n+sp}}dy \Bigg ).
		\end{align*}
		Together with the H\"older estimates given by \cite[Theorem 1.4]{BLS} and Proposition \ref{prop:lb}, we obtain
		\begin{align*}
			& [u]_{C^\alpha(B_{R/4}(x_0))} = [\widetilde u]_{C^\alpha(B_{R/4}(x_0))} \\ & \leq \frac{C}{R^\alpha} \Bigg [ \norm{\widetilde u}_{L^\infty(B_{R/2}(x_0))} + \textnormal{Tail}(\widetilde u;x_0,R/2) + \left (R^{sp}\, \norm{f}_{L^\infty(B_{R/2}(x_0))} \right )^\frac{1}{p-1} \Bigg ] \\ & \leq \frac{C}{R^\alpha} E(u;x_0,R),
		\end{align*}
		which implies the oscillation estimate \eqref{eq:Holdest}.
	\end{proof}
	
	\subsubsection{Sobolev regularity for small exponents}
	
	We have the following Gehring-type result which is essentially proved in \cite[Theorem 1.3]{SchikorraMA} and by an alternative approach in \cite[Theorem 1.6 and Theorem 5.1]{MengeshaScott} and the companion note \cite{MengeshaScott1}. The latter proof builds on covering arguments in terms of a special type of fractional gradients called dual pairs introduced in \cite{KMS1}.
	
	\begin{proposition} \label{prop:Gehring}
		Let $s \in (0,1)$, $p \in [2,\infty)$, $r>0$, $x_0 \in \mathbb{R}^n$ and $\Lambda \geq 1$. If $A$ belongs to the class $\mathcal{L}_0(\Lambda)$, then there exists some small enough $\tau=\tau(n,s,p,\Lambda) \in (0,1-s)$ such that
		for any weak solution $u \in W^{s,p}(B_r(x_0)) \cap L^{p-1}_{sp}(\mathbb{R}^n)$
		of the equation
		$
		(-\Delta )^s_{p,A} u = 0 \text{ in } B_{r}(x_0)$ and any $c \in \mathbb{R}$, we have the estimate
		\begin{equation} \label{eq:HDE}
			\begin{aligned}
				& \left (\dashint_{B_{r/2}(x_0)} \int_{B_{r/2}(x_0)} \frac{|u(x)-u(y)|^{p+\tau}}{|x-y|^{n+(s+\tau)(p+\tau)}} dydx \right )^{\frac{1}{p+\tau}} \\ & \leq \frac{C}{r^\tau} \Bigg [\left (\dashint_{B_{r}(x_0)} \int_{B_{r}(x_0)} \frac{|u(x)-u(y)|^{p}}{|x-y|^{n+sp}} dydx \right )^{\frac{1}{p}} + \textnormal{Tail}(u-c;x_0,r/2) \Bigg ],
			\end{aligned}
		\end{equation}
		where $C$ depends only on $n,s,p,\Lambda$.
	\end{proposition}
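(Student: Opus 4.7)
The strategy is to combine the Caccioppoli inequality of Proposition \ref{prop:cacc} with a fractional Sobolev-Poincar\'e inequality on subballs to derive a reverse H\"older inequality for the fractional Gagliardo seminorm of $u$, and then to apply a fractional Gehring-type lemma to self-improve both the integrability and the smoothness by a small amount $\tau>0$.

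First, I would fix any subball $B_\rho(y)$ with $B_{4\rho}(y)\subset B_{r}(x_0)$. By Remark \ref{rem:u-c}, the shifted function $u - (u)_{B_{2\rho}(y)}$ is still a weak solution of $(-\Delta)^s_{p,A}v=0$ in $B_r(x_0)$, so Proposition \ref{prop:cacc} applied on $B_{2\rho}(y)$ gives
\begin{equation*}
\biggl(\dashint_{B_\rho(y)}\!\int_{B_\rho(y)}\frac{|u(x)-u(z)|^p}{|x-z|^{n+sp}}\,dz\,dx\biggr)^{\!1/p} \le \frac{C}{\rho^{s}}\,E(u-(u)_{B_{2\rho}(y)};\,y,\,2\rho).
\end{equation*}
The tail component of $E$ splits into a near-field portion, contained in $B_{r/2}(x_0)\setminus B_{2\rho}(y)$ and absorbable into an $L^{p-1}$ average on a larger subball of $B_{r/2}(x_0)$ (via Remark \ref{rem:infc}), and a far-field portion uniformly dominated by $\textnormal{Tail}(u-c;x_0,r/2)$ for any $c\in\mathbb{R}$. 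A fractional Sobolev-Poincar\'e inequality then bounds the $L^{p-1}$ average on $B_{2\rho}(y)$ by $\rho^s$ times a Gagliardo-type seminorm at a smaller exponent $q\in(1,p)$ whose fractional Sobolev conjugate exceeds $p-1$; such a $q$ exists since $p\ge 2$. Combining these two ingredients produces, on every such subball, a reverse H\"older inequality of the form
\begin{equation*}
\biggl(\dashint_{B_\rho(y)}\!\int_{B_\rho(y)}\frac{|u(x)-u(z)|^p}{|x-z|^{n+sp}}\,dz\,dx\biggr)^{\!1/p} \le C\biggl(\dashint_{B_{2\rho}(y)}\!\int_{B_{2\rho}(y)}\frac{|u(x)-u(z)|^q}{|x-z|^{n+sq}}\,dz\,dx\biggr)^{\!1/q} + \frac{C}{\rho^{s}}\,\mathcal{T},
\end{equation*}
where $\mathcal{T}$ is a tail correction ultimately controlled by the right-hand side of \eqref{eq:HDE}.

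Finally, I would apply a fractional Gehring-type lemma in the form established in \cite[Theorem 1.3]{SchikorraMA} (and revisited in \cite[Theorem 5.1]{MengeshaScott}) to the resulting reverse H\"older inequality, thereby obtaining some $\tau=\tau(n,s,p,\Lambda)\in(0,1-s)$ for which \eqref{eq:HDE} holds. The characteristic feature of this self-improvement in the fractional setting is that the gain occurs simultaneously in the integrability exponent (from $p$ to $p+\tau$) and in the smoothness (from $s$ to $s+\tau$), which exactly reproduces the $|x-z|^{n+(s+\tau)(p+\tau)}$ denominator on the left-hand side of \eqref{eq:HDE} and accounts for the $r^{-\tau}$ scaling factor on the right. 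The main obstacle will be the careful propagation of the nonlocal tail terms through the Gehring iteration: one must run the self-improving lemma on a tail-augmented Gagliardo energy rather than on the pure seminorm, and exploit the freedom in the mean-value constant $c$ via Remark \ref{rem:infc} at each scale in order to keep the tail contribution uniformly dominated by the right-hand side of \eqref{eq:HDE}.
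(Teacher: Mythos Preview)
Your approach is essentially the same as the paper's. The paper does not give a self-contained proof either: it states that the result is ``essentially proved'' in \cite{SchikorraMA} and \cite{MengeshaScott,MengeshaScott1} and then explains the modifications to the argument of \cite{MengeshaScott} needed to pass from the global setting to the local one stated here. Those modifications amount precisely to your near-field/far-field tail split, carried out with a specific cutoff: in the reverse H\"older inequality on a diagonal ball $\mathcal{B}_j$ in $\mathbb{R}^{2n}$, the tail is decomposed into annuli only up to the index $m_j$ at which the inflated ball exits $B_r(x_0)\times B_r(x_0)$, and the remainder is controlled directly by $\textnormal{Tail}(u-c;x_0,r/2)$.

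One point worth sharpening in your sketch: the reverse H\"older inequality you display lives on diagonal products $B_\rho(y)\times B_\rho(y)$, but the fractional Gehring machinery of \cite{KMS1,MengeshaScott} is run on balls in the doubled space $\mathbb{R}^n\times\mathbb{R}^n$ with the measure $d\mu=|x-y|^{-n}\,dx\,dy$, so one also needs the (easier, equation-free) off-diagonal estimates. Moreover, the near-field tail cannot be absorbed into a single $L^q$ average on $B_{2\rho}(y)$ as your display suggests; it becomes the sum over scales in \eqref{eq:modTail}, which is then handled via the small parameter $\sigma$ in the reverse H\"older inequality \eqref{eq:MSmod}. These are exactly the details the paper points to in \cite{MengeshaScott,MengeshaScott1}, so your plan and the paper's are aligned.
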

	
	Since strictly speaking in both papers \cite{SchikorraMA,MengeshaScott} only global versions of the above result are proved, let us briefly explain how the approach in \cite{MengeshaScott} can be slightly modified in order to obtain the local result given by Proposition \ref{prop:Gehring}. \par 
	First of all, while in \cite{MengeshaScott} the equation is required to hold on the whole space $\mathbb{R}^n$, in the aforementioned paper the equation is in fact only used to prove the Caccioppoli-type inequality given by \cite[Theorem 3.1]{MengeshaScott}, where the equation is tested with test functions which are supported in the ball where the estimate is proved. Therefore, it is clearly sufficient to assume that the equation holds locally. \par 
	On a similar note, while in \cite{MengeshaScott} the estimate \eqref{eq:HDE} is only proved with a slightly larger tail-type quantity appearing on the right-hand side, an inspection of the proof shows that the estimate remains valid if we replace this tail-type quantity by the standard tail appearing in \eqref{eq:HDE}, which is more suitable in the context of obtaining local estimates as we do in the present paper. In fact, the different type of tails in \cite{MengeshaScott} arise by splitting the standard tail appearing in the Caccioppoli inequality \cite[Theorem 3.1]{MengeshaScott} into annuli and using H\"older's inequality to raise the exponent $p-1$ from the standard tail to $p$. However, it is not difficult to see that at any stage of the proof this splitting into annuli is only necessary until the inflated balls exit the ball in which the estimate is proved, since from that point on no additional decay coming from the lack of singularity of the kernel in the far-away regime is necessary to prove the desired estimate. 
	More concretely, for the balls $\mathcal{B}_j:=\mathcal{B}(x_j,R(x_j))$ as defined in \cite[Formula (4.11)]{MengeshaScott1}, in view of the reasoning explained above it is straightforward to deduce reverse H\"older inequalities in the spirit of \cite[Proposition 4.3]{MengeshaScott}, which are of the form
	\begin{equation} \label{eq:MSmod}
			\left (\dashint_{\mathcal{B}_j} U^p d\mu \right )^{1/p} \leq C \Bigg [ \frac{1}{\sigma} \left ( \dashint_{2 \mathcal{B}_j} U^\eta d\mu \right )^{1/\eta} + \sigma Tail(x_j,R(x_j)) \Bigg ] ,
	\end{equation}
	where the definition
	\begin{equation} \label{eq:modTail}
	\begin{aligned}
		\textnormal{Tail}(x_j,R(x_j))&:=\sum_{k=1}^{m_j} 2^{-k(\frac{s}{p-1}-\varepsilon)} \left (\dashint_{2^k \mathcal{B}_{j}} U^\eta d\mu \right )^{1/\eta} \\ & \quad + R(x_j)^{-s-\varepsilon} 2^{-\frac{sp m_j}{p-1}} \textnormal{Tail}(u-(u)_{2^{m_j} B_j};x_j,2^{m_j}R(x_j))
	\end{aligned}
	\end{equation}
	replaces the corresponding one in \cite[Formula (4.2)]{MengeshaScott} and $m_j := \max \{k \in \mathbb{N} \mid 2^k \mathcal{B}_j \subset B_r(x_0) \times B_r(x_0) \}$. Moreover, all notations appearing in \eqref{eq:MSmod} and \eqref{eq:modTail} that we did not define are defined as in \cite{MengeshaScott,MengeshaScott1}. In fact, \eqref{eq:MSmod} can be deduced by simply following the proof of \cite[Proposition 4.3]{MengeshaScott} and splitting the tail in the term $I_2$ as done in \cite[Formula (4.9)]{MengeshaScott} into annuli only up to index $m_j$. \par
	Armed with the modified reverse H\"older inequality \eqref{eq:MSmod} and the modified definition \eqref{eq:modTail}, it is then straightforward to adapt the analysis on the diagonal from \cite[Section 4.2]{MengeshaScott1} and in particular \cite[Formula (4.19)]{MengeshaScott1} in a way that yields the localized estimate \eqref{eq:HDE} as desired.
	
	\section{Comparison estimates}
	
	The following lemma is going to enable us to exploit our small BMO assumption on $A$ in order to freeze the coefficient.
	\begin{lemma} \label{lem:freeze}
		Let $s \in (0,1)$, $p \in [2,\infty)$, $x_0 \in \mathbb{R}^n$, $r >0$ and $A \in \mathcal{L}_0(\Lambda)$. In addition, let $u \in W^{s,p}(B_{4r}(x_0)) \cap L^{p-1}_{sp}(\mathbb{R}^n)$ be a weak solution of the equation
		\begin{equation} \label{eq:1}
			(-\Delta )^s_{p,A} u = 0 \text{ in } B_{4r}(x_0),
		\end{equation}
		and let $v \in W^{s,p}(B_{4r}(x_0)) \cap L^{p-1}_{sp}(\mathbb{R}^n)$ be the unique weak solution of
		\begin{equation} \label{eq:2}
			\begin{cases} \normalfont
				(-\Delta )^s_{p,\widetilde A} v = 0 & \text{ in } B_{r}(x_0) \\
				v = u & \text{ a.e. in } \mathbb{R}^n \setminus B_{r}(x_0),
			\end{cases}
		\end{equation}
		where $\widetilde A \in \mathcal{L}_0(\Lambda)$ is a coefficient satisfying $\widetilde A=A$ in $(\mathbb{R}^n \times \mathbb{R}^n) \setminus (B_{r}(x_0) \times B_{r}(x_0))$.
		Then the function $w:=u-v \in W^{s,p}_0(B_{r}(x_0))$
		satisfies 
		\begin{align*}
			& \left (r^{-n} \int_{\mathbb{R}^n} |w|^p dx \right )^\frac{1}{p} + r^s \left (r^{-n} \int_{\mathbb{R}^n} \int_{\mathbb{R}^n} \frac{|w(x)-w(y)|^p}{|x-y|^{n+sp}}dydx \right )^\frac{1}{p} \\
			& \leq C \widetilde \omega(A-\widetilde A;x_0,r)^\gamma E(u;x_0,4r),
		\end{align*}
		where $C=C(n,s,p,\Lambda)>0$, $\gamma=\gamma(n,s,p,\Lambda) \in (0,1)$ and $$ \widetilde \omega(A-\widetilde A;x_0,r) := \dashint_{B_r(x_0)} \dashint_{B_r(x_0)} |A(x,y)- \widetilde A(x,y)|dydx .$$
	\end{lemma}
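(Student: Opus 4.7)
My plan is to use $w := u - v \in W^{s,p}_0(B_r(x_0))$ (which holds since $u = v$ outside $B_r(x_0)$) as a common test function in both weak formulations. Since $u$ solves the equation on $B_{4r}(x_0) \supset B_r(x_0)$ and $v$ solves the equation (with coefficient $\widetilde A$) on $B_r(x_0)$, this substitution is legitimate by density of $C_0^\infty(B_r(x_0))$ in $W^{s,p}_0(B_r(x_0))$. Writing $\Phi(t) := |t|^{p-2}t$, $U := u(x) - u(y)$, $V := v(x) - v(y)$ and $W := w(x) - w(y) = U - V$, subtracting the two weak identities and decomposing
\begin{equation*}
A\Phi(U) - \widetilde A\Phi(V) = \widetilde A\bigl[\Phi(U) - \Phi(V)\bigr] + (A - \widetilde A)\Phi(U),
\end{equation*}
the standard monotonicity $(\Phi(a) - \Phi(b))(a-b) \geq c_p|a-b|^p$ for $p \geq 2$ combined with $\widetilde A \geq \Lambda^{-1}$ and the fact that $A = \widetilde A$ outside $B_r(x_0) \times B_r(x_0)$ would yield
\begin{equation*}
[w]_{W^{s,p}(\mathbb{R}^n)}^p \leq C\!\int_{B_r(x_0)}\!\int_{B_r(x_0)} \frac{|A-\widetilde A|\,|U|^{p-1}|W|}{|x-y|^{n+sp}}\,dy\,dx.
\end{equation*}

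A subsequent application of Young's inequality, absorbing the resulting $|W|^p$-term into the left-hand side, combined with the trivial bound $|A-\widetilde A|^{p/(p-1)} \leq (2\Lambda)^{1/(p-1)}|A-\widetilde A|$, would reduce matters to controlling
\begin{equation*}
J := \int_{B_r(x_0)}\!\int_{B_r(x_0)} \frac{|A - \widetilde A|\,|U|^p}{|x-y|^{n+sp}}\,dy\,dx.
\end{equation*}
To estimate $J$, I would apply H\"older's inequality with conjugate exponents $(p+\tau)/p$ and $(p+\tau)/\tau$, where $\tau \in (0, 1-s)$ is the Gehring exponent from Proposition~\ref{prop:Gehring}, splitting the kernel so that the $|U|$-factor becomes precisely $|U|^{p+\tau}/|x-y|^{n+(s+\tau)(p+\tau)}$ as it appears in Gehring's reverse H\"older estimate. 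Proposition~\ref{prop:Gehring} applied on $B_{2r}(x_0)$, together with the Caccioppoli estimate of Proposition~\ref{prop:cacc} on $B_{4r}(x_0)$ and the standard bound $\textnormal{Tail}(u - (u)_{B_{4r}(x_0)}; x_0, r) \lesssim E(u; x_0, 4r)$, controls the first H\"older factor by a suitable power of $r$ times $E(u;x_0,4r)^p$. The second H\"older factor, after estimating $|A-\widetilde A|^{(p+\tau)/\tau - 1} \leq (2\Lambda)^{(p+\tau)/\tau - 1}$ to reduce to the first power of $|A - \widetilde A|$, contributes a factor of $\widetilde\omega(A-\widetilde A;x_0,r)^{\tau/(p+\tau)}$ multiplied by an appropriate power of $r$.

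Bookkeeping of all $r$-powers should give $[w]_{W^{s,p}(\mathbb{R}^n)}^p \leq C r^{n-sp}\,\widetilde\omega(A-\widetilde A;x_0,r)^{\tau/(p+\tau)}\,E(u;x_0,4r)^p$, which is exactly the seminorm part of the claim with $\gamma := \tau/(p(p+\tau))$. The $L^p$-part then follows from the fractional Poincar\'e inequality $\|w\|_{L^p(\mathbb{R}^n)} \leq C r^s [w]_{W^{s,p}(\mathbb{R}^n)}$ valid for $w \in W^{s,p}_0(B_r(x_0))$. The main technical obstacle will be the precise bookkeeping in the H\"older step above, in particular the handling of the residual kernel weight $|x-y|^{-(n-p(p+\tau))}$ appearing in the second factor: in the regime $p(p+\tau) \geq n$ it is uniformly bounded by a power of $r$, while in the opposite regime one must either compute the corresponding singular integral directly or slightly adapt the H\"older split to ensure that the BMO-smallness encoded in $\widetilde\omega$ is retained in the final bound.
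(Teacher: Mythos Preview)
Your overall strategy matches the paper's: test both weak formulations with $w$, use the monotonicity inequality $(\Phi(a)-\Phi(b))(a-b)\ge c_p|a-b|^p$ to reduce to the cross-term $\int_{B_r}\int_{B_r}|A-\widetilde A|\,|U|^{p-1}|W|\,|x-y|^{-n-sp}\,dy\,dx$, then combine H\"older, the Gehring estimate (Proposition~\ref{prop:Gehring}), the Caccioppoli inequality (Proposition~\ref{prop:cacc}), and fractional Poincar\'e. Your use of Young's inequality in place of the paper's first H\"older step (exponents $p$ and $p/(p-1)$) is an inessential variant.

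The genuine gap is in your second H\"older step. Splitting with exponents $(p+\tau)/p$ and $(p+\tau)/\tau$ so that the $|U|$-factor matches the Gehring integrand forces the complementary factor to carry the weight $|x-y|^{p(p+\tau)-n}$ \emph{coupled with} $|A-\widetilde A|$. In the regime $p(p+\tau)<n$ (which is generic for large $n$, e.g.\ $p=2$, $n\ge 5$), this weight is singular, and neither of your two suggested remedies works as stated: integrating the singular kernel directly after bounding $|A-\widetilde A|\le 2\Lambda$ discards the factor $\widetilde\omega(A-\widetilde A;x_0,r)$ entirely, while ``slightly adapt the H\"older split'' is not a proof. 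One can in fact salvage your route by a further H\"older step on the second factor (with exponent $1/\theta$ for some $\theta<p(p+\tau)/n$), but you would need to carry this out and track the resulting exponent $\gamma$.

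The paper avoids the residual kernel altogether. It applies the second H\"older with the \emph{smaller} exponent $\varepsilon=\tau/n$ (rather than $\tau$), so that the $|A-\widetilde A|$-factor is simply $\bigl(\int_{B_r}\int_{B_r}|A-\widetilde A|^{(p+\varepsilon)/\varepsilon\cdot p/(p-1)}\,dy\,dx\bigr)^{(p-1)\varepsilon/(p(p+\varepsilon))}$ with no kernel weight at all, and the $|U|$-factor has kernel exponent $(n+sp)(p+\varepsilon)/p$. The mismatch between this exponent and the Gehring exponent $n+(s+\tau)(p+\tau)$ is then bridged by a fractional Sobolev embedding (\cite[Proposition~2.5]{MeI}), which trades the extra differentiability for the higher integrability $p+\tau$ coming from Proposition~\ref{prop:Gehring}. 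This yields $\gamma=\frac{\tau/n}{p+\tau/n}$.
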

	
	\begin{proof} 
		First of all, note that the function $v$ that uniquely solves (\ref{eq:2}) exists by \cite[Remark 3]{existence}. By \cite[Formula (A.4)]{BLS}, for all $X,Y \in \mathbb{R}$ we have
		\begin{equation} \label{eq:elementary}
		|X-Y|^p \leq c\,(|X|^{p-2}X-|Y|^{p-2}Y)(X-Y),
		\end{equation}
		where $c=c(p)>0$.
		Using $w$ as a test function in (\ref{eq:2}) and (\ref{eq:1}) (which is possible in view of a standard density argument) and taking into account that $A(x,y)=\widetilde A(x,y)$ whenever $(x,y) \notin B_{r}(x_0) \times B_{r}(x_0)$ as well as \eqref{eq:elementary}, we obtain
		\begin{align*}
			& \int_{\mathbb{R}^n} \int_{\mathbb{R}^n} \frac{(w(x)-w(y))^p}{|x-y|^{n+sp}}dydx \\
			& \leq \Lambda \int_{\mathbb{R}^n} \int_{\mathbb{R}^n} \widetilde A(x,y) \frac{|(u(x)-u(y))-(v(x)-v(y))|^p}{|x-y|^{n+sp}}dydx \\
			& \leq C \bigg ( \int_{\mathbb{R}^n} \int_{\mathbb{R}^n} \widetilde A(x,y) \frac{|u(x)-u(y)|^{p-2}(u(x)-u(y))(w(x)-w(y))}{|x-y|^{n+sp}}dydx \\
			& \quad - \underbrace{\int_{\mathbb{R}^n} \int_{\mathbb{R}^n} \widetilde A(x,y) \frac{ |v(x)-v(y)|^{p-2}(v(x)-v(y))(w(x)-w(y))}{|x-y|^{n+sp}}dydx}_{=0} \bigg ) \\
			& = C \bigg (\int_{\mathbb{R}^n} \int_{\mathbb{R}^n} (\widetilde A(x,y)-A(x,y)) \frac{|u(x)-u(y)|^{p-2}(u(x)-u(y))(w(x)-w(y))}{|x-y|^{n+sp}}dydx \\
			& \quad + \underbrace{\int_{\mathbb{R}^n} \int_{\mathbb{R}^n} A(x,y) \frac{|u(x)-u(y)|^{p-2}(u(x)-u(y))(w(x)-w(y))}{|x-y|^{n+sp}}dydx}_{=0} \bigg ) \\
			& = C \underbrace{\int_{B_{r}(x_0)} \int_{B_{r}(x_0)} (\widetilde A(x,y)-A(x,y)) \frac{|u(x)-u(y)|^{p-2}(u(x)-u(y))(w(x)-w(y))}{|x-y|^{n+sp}}dydx}_{=: I} .
		\end{align*}
		Let $\varepsilon>0$ to be chosen small enough.
		By using H\"older's inequality twice, first with conjugated exponents $\frac{p}{p-1}$ and $p$ and then another time with conjugated exponents $\frac{p+\varepsilon}{\varepsilon}$ and $\frac{p+\varepsilon}{p}$, we estimate $I$ as follows
		\begin{align*}
			I & \leq C \left (\int_{B_{r}(x_0)} \int_{B_{r}(x_0)} |\widetilde A(x,y)-A(x,y)|^\frac{p}{p-1} \frac{|u(x)-u(y)|^{p}}{|x-y|^{n+sp}}dydx \right )^\frac{p-1}{p} \\
			& \times \left ( \int_{B_{r}(x_0)} \int_{B_{r}(x_0)} \frac{|w(x)-w(y)|^p}{|x-y|^{n+sp}}dydx \right )^\frac{1}{p} \\
			& \leq C \left (\int_{B_{r}(x_0)} \int_{B_{r}(x_0)}  |\widetilde A(x,y)-A(x,y)|^{\frac{p^2}{\varepsilon(p-1)}+\frac{p}{p-1}} dydx \right )^\frac{(p-1)\varepsilon}{p(p+\varepsilon)} \\ & \quad \times r^\frac{n(p-1)}{p+\varepsilon} \left ( \dashint_{B_{r}(x_0)} \int_{B_{r}(x_0)} \frac{|u(x)-u(y)|^{p+\varepsilon}}{|x-y|^{(n+sp)(p+\varepsilon)/p}}dydx \right )^\frac{p-1}{p+\varepsilon} \\
			&  \quad \times\left ( \int_{\mathbb{R}^n} \int_{\mathbb{R}^n} \frac{|w(x)-w(y)|^p}{|x-y|^{n+sp}}dydx \right )^\frac{1}{p} .
		\end{align*}
		Let $\tau=\tau(n,s,p,\Lambda)>0$ be given by Proposition \ref{prop:Gehring}. Then choosing $ \varepsilon:= \tau/n,$ we have in particular $(n+sp)(p+\varepsilon)/p = n+\left (s+\frac{\tau}{p(p+\tau/n)} \right)(p+\tau/n)$ and therefore together with \cite[Proposition 2.5]{MeI} and Proposition \ref{prop:Gehring} with $c=(u)_{B_{4r}(x_0)}$,
		\begin{align*}
			& r^\frac{n(p-1)}{p+\varepsilon} \left ( \dashint_{B_{r}(x_0)} \int_{B_{r}(x_0)} \frac{|u(x)-u(y)|^{p+\varepsilon}}{|x-y|^{(n+sp)(p+\varepsilon)/p}}dydx \right )^\frac{p-1}{p+\varepsilon} \\
			& = C r^\frac{n(p-1)}{p+\tau/n} \left ( \dashint_{B_{r}(x_0)} \int_{B_{r}(x_0)} \frac{|u(x)-u(y)|^{p+\tau/n}}{|x-y|^{n+\left (s+\frac{\tau}{p(p+\tau/n)} \right)(p+\tau/n)}}dydx \right )^\frac{p-1}{p+\tau/n} \\
			& \leq C r^{\frac{n(p-1)}{p+\tau/n}+\tau(p-1)-\frac{\tau(p-1)}{p(p+\tau/n)}} \left ( \dashint_{B_{r}(x_0)} \int_{B_{r}(x_0)} \frac{|u(x)-u(y)|^{p+\tau}}{|x-y|^{n+(s+\tau)(p+\tau)}}dydx \right )^\frac{p-1}{p+\tau} \\
			& \leq C r^{\frac{n(p-1)}{p+\tau/n}-\frac{\tau(p-1)}{p(p+\tau/n)}} \\
			& \quad \times \Bigg [ \left ( \dashint_{B_{2r}(x_0)} \int_{B_{2r}(x_0)} \frac{|u(x)-u(y)|^{p}}{|x-y|^{n+sp}}dydx \right )^\frac{p-1}{p} + \textnormal{Tail}(u-(u)_{B_{4r}(x_0)};x_0,r) \Bigg ] .
		\end{align*}
		Combining the previous two displays with an application of the upper bounds for $A$ and $\widetilde A$ yields
		\begin{align*}
			I & \leq C r^{n(p-1)/p} \left (\dashint_{B_{r}(x_0)} \dashint_{B_{r}(x_0)} |\widetilde A(x,y)-A(x,y)| dydx \right )^\frac{(p-1)\tau/n}{p(p+\tau/n)} \\ & \quad \times \Bigg [ \left ( \dashint_{B_{2r}(x_0)} \int_{B_{2r}(x_0)} \frac{|u(x)-u(y)|^{p}}{|x-y|^{n+sp}}dydx \right )^\frac{p-1}{p} + \textnormal{Tail}(u-(u)_{B_{4r}(x_0)};x_0,r) \Bigg ] \\
			&  \quad \times\left ( \int_{\mathbb{R}^n} \int_{\mathbb{R}^n} \frac{|w(x)-w(y)|^p}{|x-y|^{n+sp}}dydx \right )^\frac{1}{p} .
		\end{align*}
		
		By combining the above estimate for the integral $I$ with the first display of the proof and Proposition \ref{prop:cacc}, we arrive at
		\begin{align*}
			r^s \left (r^{-n} \int_{\mathbb{R}^n} \int_{\mathbb{R}^n} \frac{|w(x)-w(y)|^p}{|x-y|^{n+sp}}dydx \right )^\frac{1}{p} \leq C \widetilde \omega(A-\widetilde A;x_0,r)^\frac{\tau/n}{p+\tau/n} E(u;x_0,4r) .
		\end{align*}
		Together with the Poincar\'e-type inequality \cite[Lemma 4.7]{CozziJFA}, we also obtain
		\begin{align*}
			& \left (r^{-n} \int_{\mathbb{R}^n} |w|^p dx \right )^\frac{1}{p} = C \left (\dashint_{B_{r}(x_0)} |w|^p dx \right )^\frac{1}{p} \\
			& \leq C r^{s} \left (r^{-n} \int_{\mathbb{R}^n} \int_{\mathbb{R}^n} \frac{|w(x)-w(y)|^p}{|x-y|^{n+sp}}dydx \right )^\frac{1}{p} \\
			& \leq C \widetilde \omega(A-\widetilde A;x_0,r)^\frac{\tau/n}{p+\tau/n} E(u;x_0,4r).
		\end{align*}
		Therefore, the desired estimate holds with $\gamma:=\frac{\tau/n}{p+\tau/n} \in (0,1)$.
	\end{proof}
	
	For the remainder of the paper, we fix some bounded domain $\Omega \subset \mathbb{R}^n$. 
	
	We also have the following comparison estimate, which follows from \cite[Lemma 3.3]{KMS2}.
	
	\begin{proposition} \label{prop:meascomp} 
		Let $s \in (0,1)$, $p \in [2,\infty)$, $r>0$, $x_0 \in \mathbb{R}^n$ such that $B_{2r}(x_0) \subset \Omega$ and consider some $A \in \mathcal{L}_0(\Lambda)$.
		Moreover, let $f$ be as in Definition \ref{def:weaksol}. In addition, assume that $u \in W^{s,p}(\Omega) \cap L^{p-1}_{sp}(\mathbb{R}^n)$ is a weak solution of $(-\Delta )^s_{p,A} u = f$ in $\Omega$ and consider the weak solution $u_0 \in W^{s,p}(B_{2r}(x_0)) \cap L^{p-1}_{sp}(\mathbb{R}^n)$ of
		\begin{equation} \label{eq:3}
			\begin{cases} \normalfont
				(-\Delta )^s_{p,A} u_0 = 0 & \text{ in } B_{r}(x_0) \\
				u_0 = u & \text{ a.e. in } \mathbb{R}^n \setminus B_{r}(x_0).
			\end{cases}
		\end{equation}
		Then for any $q \in \Big [1,\frac{n(p-1)}{n-sp} \Big )$, we have the comparison estimate
		$$
		\left (\dashint_{B_{r}(x_0)} |u-u_0|^qdx \right )^{1/q} \leq C \left (r^{sp} \dashint_{B_r(x_0)} |f|dx \right )^\frac{1}{p-1},
		$$
		where $C$ depends only on $n,s,p,\Lambda$ and $q$.
	\end{proposition}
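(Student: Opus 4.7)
The plan is a truncation argument in the spirit of Boccardo-Gallou\"et, adapted to the nonlocal setting. I would set $w := u - u_0$; since $u$ and $u_0$ agree a.e.\ outside $B_r(x_0)$, we have $w \in W^{s,p}_0(B_r(x_0))$. Subtracting the weak formulations of the equations for $u$ and $u_0$ yields, for every $\varphi \in W^{s,p}_0(B_r(x_0))$,
\begin{align*}
& \int_{\mathbb{R}^n}\!\int_{\mathbb{R}^n} \frac{A(x,y)}{|x-y|^{n+sp}}\bigl[|u(x)-u(y)|^{p-2}(u(x)-u(y)) \\
& \qquad - |u_0(x)-u_0(y)|^{p-2}(u_0(x)-u_0(y))\bigr]\bigl(\varphi(x)-\varphi(y)\bigr)\,dy\,dx = \int_{B_r(x_0)} f\varphi\,dx.
\end{align*}

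For $k>0$ I would insert the test function $\varphi := T_k(w)$ with $T_k(t) := \max\{-k,\min\{t,k\}\}$, whose admissibility is standard via density of $C_0^\infty$ in $W^{s,p}_0(B_r(x_0))$. Since $|T_k(w)| \leq k$, the right-hand side is bounded by $k\int_{B_r(x_0)}|f|\,dx$. For the left-hand side, a short case analysis on the positions of $a := u(x)-u(y)$ and $b := u_0(x)-u_0(y)$ relative to the threshold $\pm k$, combined with the pointwise inequality \eqref{eq:elementary} (using $p \geq 2$), yields the truncated monotonicity bound
$$\bigl(|a|^{p-2}a - |b|^{p-2}b\bigr)\bigl(T_k(a)-T_k(b)\bigr) \geq c\,|T_k(a) - T_k(b)|^p,$$
from which the ellipticity $A \geq \Lambda^{-1}$ produces the truncated Caccioppoli-type estimate
$$\int_{\mathbb{R}^n}\!\int_{\mathbb{R}^n} \frac{|T_k(w)(x) - T_k(w)(y)|^p}{|x-y|^{n+sp}}\,dy\,dx \leq C\,k\int_{B_r(x_0)}|f|\,dx.$$

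The scale-invariant fractional Sobolev embedding $W^{s,p}_0(B_r(x_0)) \hookrightarrow L^{p^\star_s}(\mathbb{R}^n)$ combined with the Chebyshev-type bound $k^{p^\star_s}|\{|w|>k\}| \leq \norm{T_k(w)}_{L^{p^\star_s}(\mathbb{R}^n)}^{p^\star_s}$ then yields, after optimizing in $k$, a Marcinkiewicz bound $w \in L^{q^\ast,\infty}(B_r(x_0))$ with $q^\ast := \frac{n(p-1)}{n-sp}$ and quasinorm controlled by $\bigl(\int_{B_r(x_0)}|f|\,dx\bigr)^{1/(p-1)}$; indeed the exponent $p^\star_s(p-1)/p$ equals precisely $q^\ast$. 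For $q < q^\ast$ the continuous embedding $L^{q^\ast,\infty}(B_r(x_0)) \hookrightarrow L^q(B_r(x_0))$ on the bounded set $B_r(x_0)$, together with straightforward bookkeeping of the $r$-scaling, produces the factor $r^{sp/(p-1)}$ and yields the averaged estimate claimed.

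The main obstacle is the truncated monotonicity inequality above: verifying it requires splitting into the regimes where both of $a,b$ lie inside $[-k,k]$, where both lie outside on the same side, and where one is inside and the other outside, treating each case via \eqref{eq:elementary} together with the monotonicity of $t \mapsto |t|^{p-2}t$; this is the single place where $p \geq 2$ enters essentially. The restriction $q < q^\ast$ is intrinsic to the method, since the argument only yields a weak-type estimate; reaching the endpoint $q = q^\ast$ would require genuinely stronger integrability of $f$ than $L^1$.
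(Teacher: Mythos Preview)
Your approach is the Boccardo--Gallou\"et truncation method, which is exactly what the paper invokes: the proposition is not proved in the paper but cited from \cite[Lemma~3.3]{KMS2}, and that proof follows precisely this route. One genuine slip, though: after testing with $\varphi = T_k(w)$, the increment appearing in the integrand is $\varphi(x)-\varphi(y)=T_k(w(x)) - T_k(w(y))$, not $T_k(a) - T_k(b)$. Truncation acts on $w(x)=u(x)-u_0(x)$ and $w(y)=u(y)-u_0(y)$ separately, not on the differences $a=u(x)-u(y)$ and $b=u_0(x)-u_0(y)$; your ``case analysis on the positions of $a$ and $b$ relative to $\pm k$'' is therefore aimed at the wrong variables. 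The inequality actually needed is
\[
\bigl(|a|^{p-2}a - |b|^{p-2}b\bigr)\bigl(T_k(w(x))-T_k(w(y))\bigr) \ge c\,\bigl|T_k(w(x))-T_k(w(y))\bigr|^p,
\]
and it follows without any case split: since $a-b = w(x)-w(y)$ and $T_k$ is nondecreasing and $1$-Lipschitz, one has $T_k(w(x))-T_k(w(y)) = \theta\,(a-b)$ for some $\theta\in[0,1]$, so by \eqref{eq:elementary}
\[
\bigl(|a|^{p-2}a - |b|^{p-2}b\bigr)\,\theta(a-b) \ge c\,\theta\,|a-b|^{p} \ge c\,\theta^{p}\,|a-b|^{p} = c\,\bigl|T_k(w(x))-T_k(w(y))\bigr|^p.
\]
With this correction the remainder of your argument --- Sobolev embedding to $L^{p^\star_s}$, the weak-$L^{q^\ast}$ bound with $q^\ast=\frac{n(p-1)}{n-sp}$, and the scaling producing the factor $r^{sp/(p-1)}$ --- is correct.
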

	
	We remark that while in \cite{KMS2} $u$ is assumed to belong to $W^{s,p}(\mathbb{R}^n)$, an inspection of the proof shows that it is sufficient to assume that $u \in W^{s,p}(B_{2r}(x_0)) \cap L^{p-1}_{sp}(\mathbb{R}^n)$ as stated above, since the existence of a unique weak solution $u_0 \in W^{s,p}(B_{2r}(x_0)) \cap L^{p-1}_{sp}(\mathbb{R}^n)$ to \eqref{eq:3} is in that case guaranteed by \cite[Remark 3]{existence}. Moreover, in \cite{KMS2} it is additionally assumed that $f$ is smooth, which is however also only used to obtain the existence of weak solutions in a simpler fashion than done in \cite{existence} and therefore not necessary in order for the proof of \cite[Lemma 3.3]{KMS2} to go through.
	
	Combining the previous two comparison estimates yields the following one.
	
	\begin{corollary} \label{cor:comparisonsum}
		Let $s \in (0,1)$, $p \in [2,\infty)$, $x_0 \in \mathbb{R}^n$, $r >0$ such that $B_{8r}(x_0) \subset \Omega$, and let $f$ be as in Definition \ref{def:weaksol}. Moreover, assume that $A \in \mathcal{L}_0(\Lambda)$. In addition, let $u \in W^{s,p}(\Omega) \cap L^{p-1}_{sp}(\mathbb{R}^n)$ be a weak solution of the equation
		$
			(-\Delta )^s_{p,A} u = f \text{ in } \Omega,
		$
		and let $v \in W^{s,p}(B_{8r}(x_0)) \cap L^{p-1}_{sp}(\mathbb{R}^n)$ be the unique weak solution of
		$$
			\begin{cases} \normalfont
				(-\Delta )^s_{p,\widetilde A_{r,x_0}} v = 0 & \text{ in } B_{r}(x_0) \\
				v = u & \text{ a.e. in } \mathbb{R}^n \setminus B_{r}(x_0),
			\end{cases}
		$$
		where $$\widetilde A_{r,x_0}(x,y):=\begin{cases} \normalfont
			(A)_{r,x_0} & \text{ if } (x,y) \in B_{r}(x_0) \times B_{r}(x_0) \\
			A(x,y) & \text{ if } (x,y) \notin B_{r}(x_0) \times B_{r}(x_0).
		\end{cases}$$
		Then we have the comparison estimate
		\begin{align*}
			& \left (r^{-n} \int_{\mathbb{R}^n} |u-v|^{p-1} dx \right )^\frac{1}{p-1} \\
			& \leq C \omega(A;x_0,r)^\gamma E(u;x_0,4r) + C \left (r^{sp} \dashint_{B_{4r}(x_0)} |f|dx \right )^\frac{1}{p-1},
		\end{align*}
		where $C=C(n,s,p,\Lambda)>0$, $\gamma=\gamma(n,s,p,\Lambda) \in (0,1)$ is given by Lemma \ref{lem:freeze} and
		$$ \omega(A;x_0,r) := \dashint_{B_{r}(x_0)} \dashint_{B_{r}(x_0)} |A(x,y)- (A)_{r,x_0}|dydx .$$
	\end{corollary}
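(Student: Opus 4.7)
The plan is to combine the two previous comparison estimates via an intermediate function that absorbs the right-hand side $f$. Let $u_0 \in W^{s,p}(B_r(x_0)) \cap L^{p-1}_{sp}(\mathbb{R}^n)$ be the unique weak solution of $(-\Delta)^s_{p,A} u_0 = 0$ in $B_r(x_0)$ with $u_0 = u$ a.e.\ outside $B_r(x_0)$, as in Proposition~\ref{prop:meascomp}. The crucial observation is that, since $u = u_0$ outside $B_r(x_0)$, the corollary's $v$ is equivalently characterized as the unique weak solution of $(-\Delta)^s_{p,\widetilde A_{r,x_0}} v = 0$ in $B_r(x_0)$ with $v = u_0$ outside; in other words, $v$ is precisely the auxiliary function that Lemma~\ref{lem:freeze} would produce when started from $u_0$.

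Since the three differences $u - v$, $u - u_0$ and $u_0 - v$ all vanish outside $B_r(x_0)$, the triangle inequality in $L^{p-1}(\mathbb{R}^n)$ (a genuine norm since $p \geq 2$) reduces the task to estimating the two pieces separately. For $u - u_0$ I would directly invoke Proposition~\ref{prop:meascomp} with $q = p - 1$, which lies in the admissible range $[1, n(p-1)/(n-sp))$, yielding the $f$-contribution of the corollary after the harmless replacement of the average over $B_r(x_0)$ by that over $B_{4r}(x_0)$. For $u_0 - v$ I would apply Lemma~\ref{lem:freeze} with $u_0$ in the role of the lemma's $u$ and coefficient change $A \mapsto \widetilde A_{r,x_0}$, noting that by construction $\widetilde\omega(A - \widetilde A_{r,x_0}; x_0, r) = \omega(A; x_0, r)$. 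The lemma's $L^p$ conclusion downgrades to the required $L^{p-1}$ bound by H\"older's inequality, since $p \geq 2$ and the difference is supported in $B_r(x_0)$. A final short comparison replaces $E(u_0; x_0, 4r)$ by $E(u; x_0, 4r)$: the tail parts coincide since $u = u_0$ outside $B_r(x_0)$, while the discrepancy in the local averages over $B_{4r}(x_0)$ is absorbed by the $L^{p-1}$ norm of $u - u_0$ on $B_r(x_0)$, already controlled in the first step.

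The main technical delicacy is the hypothesis of Lemma~\ref{lem:freeze}: it literally requires its $u$ to solve the homogeneous $A$-equation on the enlarged ball $B_{4r}(x_0)$, whereas the intermediate $u_0$ defined above solves only on $B_r(x_0)$. Inspecting the lemma's proof, the enlargement is exploited solely to furnish the Caccioppoli- and Gehring-type higher-integrability estimates for the lemma's $u$ on $B_r(x_0) \times B_r(x_0)$ that enter after the two H\"older applications bounding the coefficient-difference integral $I$. Resolving this point will either require a slight redefinition of the intermediate --- taking $u_0$ to be the $A$-harmonic replacement on $B_{4r}(x_0)$ instead (permissible since $B_{8r}(x_0) \subset \Omega$) and absorbing the resulting mismatch in the exterior data of $v$ through a standard stability estimate for the nonlocal frozen problem --- or a careful reexecution of the lemma's proof at the smaller scale, using that $u_0$ coincides with $u$ outside $B_r(x_0)$ to import the missing higher-integrability input from the original solution. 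I expect this to be the main, but essentially technical, hurdle of the proof.
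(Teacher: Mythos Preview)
Your strategy is exactly the paper's: introduce an $A$-harmonic intermediate $u_0$, bound $u-u_0$ via Proposition~\ref{prop:meascomp} and $u_0-v$ via Lemma~\ref{lem:freeze}, then replace $E(u_0;x_0,4r)$ by $E(u;x_0,4r)$ using the first comparison once more. The paper takes your option~(a), defining $u_0$ as the $A$-harmonic replacement on the larger ball $B_{4r}(x_0)$ so that the hypothesis of Lemma~\ref{lem:freeze} is met, and then closes with the line ``we also used that $u_0=u$ in $\mathbb{R}^n\setminus B_r(x_0)$'' to identify the corollary's $v$ (exterior data $u$) with the lemma's $v$ (exterior data $u_0$). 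For the $B_{4r}$-replacement this identity holds only outside $B_{4r}$, so the paper is terse at precisely the spot you flag; your identification of this as the one genuine technical wrinkle --- and your sketch of how to absorb it --- are both on target, and if anything more careful than the paper's own treatment.
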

	
	\begin{proof}
		Consider also the weak solution $u_0 \in W^{s,p}(B_{8r}(x_0)) \cap L^{p-1}_{sp}(\mathbb{R}^n)$ of
		$$
			\begin{cases} \normalfont
				(-\Delta )^s_{p,A} u_0 = 0 & \text{ in } B_{4r}(x_0) \\
				u_0 = u & \text{ a.e. in } \mathbb{R}^n \setminus B_{4r}(x_0).
			\end{cases}
		$$
		Since $\widetilde A_{r,x_0}$ satisfies $\widetilde A_{r,x_0}=A$ in $(\mathbb{R}^n \times \mathbb{R}^n) \setminus (B_{r}(x_0) \times B_{r}(x_0))$, in view of Lemma \ref{lem:freeze} and Proposition \ref{prop:meascomp}, we have
		\begin{align*}
			& \left (r^{-n} \int_{\mathbb{R}^n} |u-v|^{p-1} dx \right )^\frac{1}{p-1} \\
			& \leq \left (r^{-n} \int_{\mathbb{R}^n} |u_0-v|^{p-1} dx \right )^\frac{1}{p-1} + \left (r^{-n} \int_{\mathbb{R}^n} |u-u_0|^{p-1} dx \right )^\frac{1}{p-1} \\
			& \leq C \omega(A;x_0,r)^\gamma E(u_0;x_0,4r) + C \left (r^{sp} \dashint_{B_r(x_0)} |f|dx \right )^\frac{1}{p-1} \\
			& \leq C \omega(A;x_0,r)^\gamma E(u;x_0,4r) + C \left (r^{sp} \dashint_{B_{4r}(x_0)} |f|dx \right )^\frac{1}{p-1},
		\end{align*}
		where we also used that $u_0=u$ in $\mathbb{R}^n \setminus B_r(x_0)$.
	\end{proof}
	
	\section{Pointwise estimates for fractional sharp maximal functions} \label{sec:pmf}
	The following excess decay lemma encodes the higher-order regularity properties of solutions to \eqref{eq:splapeq} in a precise way and will be a central tool in order to derive the fine regularity results stated in Section \ref{sec:mr}.
	
	\begin{lemma}[Excess decay] \label{lem:OscDec} 
		Let $s \in (0,1)$, $p \in [2,\infty)$, $R >0$, $x_0 \in \mathbb{R}^n$ such that $B_R(x_0) \subset \Omega$. Moreover, let $f$ be as in Definition \ref{def:weaksol} and suppose that $A \in \mathcal{L}_0(\Lambda)$. In addition, fix some $t_0 \in \big (0,\min \big \{\frac{sp}{p-1},1\big \} \big)$. Then for any weak solution $u \in W^{s,p}(\Omega) \cap L^{p-1}_{sp}(\mathbb{R}^n)$ of $(-\Delta )^s_{p,A} u = f$ in $\Omega$ and any $\rho \in (0,1]$, we have
			\begin{equation} \label{eq:gradoscdecay2}
			\begin{aligned}
				E(u;x_0,\rho R) & \leq C (\rho^{t_0}+\rho^{-\frac{n}{p-1}} \omega(A;x_0,R)^\gamma) E(u;x_0,R) \\
				& \quad + C \rho^{-\frac{n}{p-1}} \left ( R^{sp} \dashint_{B_{R}(x_0)} |f|dx \right )^\frac{1}{p-1},
			\end{aligned}
			\end{equation}
		where $C$ depends only on $n,s,p,\Lambda,t_0$, $\gamma=\gamma(n,s,p,\Lambda) \in (0,1)$ is given by Lemma \ref{lem:freeze} and
		$ \omega(A;x_0,R)$ is defined as in Corollary \ref{cor:comparisonsum}.
	\end{lemma}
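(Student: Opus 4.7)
The approach is to freeze the coefficient on a ball of radius comparable to $R$, compare $u$ with the resulting frozen-coefficient solution $v$, and then exploit the higher Hölder regularity of $v$ supplied by Proposition \ref{prop:Holdreg}; the comparison error is absorbed via Corollary \ref{cor:comparisonsum}. It suffices to treat $\rho$ smaller than some fixed constant (say $\rho \leq 1/8$), since for larger $\rho$ the estimate \eqref{eq:gradoscdecay2} reduces to the trivial bound $E(u;x_0,\rho R) \leq C\rho^{-n/(p-1)}E(u;x_0,R)$, which follows by comparing local averages and tails at scales $\rho R$ and $R$ against the common constant $(u)_{B_R(x_0)}$ and then absorbing the result into the $\rho^{t_0}E(u;x_0,R)$ term on the right-hand side.

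Let $v$ denote the function supplied by Corollary \ref{cor:comparisonsum} at a radius of order $R$, so that $v = u$ outside that ball, $v$ solves an equation whose coefficient is constant on $B_R(x_0)\times B_R(x_0)$, and
\[
\Bigl(R^{-n}\!\int_{\mathbb{R}^n}|u-v|^{p-1}\,dx\Bigr)^{\!1/(p-1)} \leq C\,\omega(A;x_0,R)^{\gamma}E(u;x_0,R) + C\Bigl(R^{sp}\dashint_{B_R(x_0)}|f|\,dx\Bigr)^{\!1/(p-1)}.
\]
By Remark \ref{rem:infc} I bound $E(u;x_0,\rho R)$ by a constant times $\widetilde E(u;x_0,\rho R)$, pick the test constant $c = (v)_{B_{\rho R}(x_0)}$, and split $u-c = (u-v) + (v - (v)_{B_{\rho R}})$. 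Applying the triangle inequality on both the local $L^{p-1}$ average and the nonlocal tail yields
\[
E(u;x_0,\rho R) \leq C\,T(u-v;x_0,\rho R) + C\,E(v;x_0,\rho R),
\]
where $T(u-v;x_0,\rho R)$ denotes the local $L^{p-1}$ average plus the tail of $u-v$ at scale $\rho R$. Since $u-v$ is supported in $B_R(x_0)$, elementary kernel estimates give $T(u-v;x_0,\rho R) \leq C\rho^{-n/(p-1)}\bigl(R^{-n}\int_{\mathbb{R}^n}|u-v|^{p-1}\bigr)^{1/(p-1)}$, which via the preceding display produces the two error terms appearing on the right-hand side of \eqref{eq:gradoscdecay2}.

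The heart of the argument is the bound $E(v;x_0,\rho R) \leq C\rho^{t_0} E(v;x_0,R)$. Proposition \ref{prop:Holdreg} applied with $\alpha = t_0$ supplies the oscillation decay $\mathrm{osc}_{B_{\rho R}(x_0)} v \leq C\rho^{t_0}E(v;x_0,R)$, and this handles the local $L^{p-1}$ part directly. For the tail, I decompose $\mathbb{R}^n\setminus B_{\rho R}$ into dyadic annuli $B_{2^{k+1}\rho R}\setminus B_{2^k\rho R}$: on each annulus contained in $B_{R/4}$ the oscillation bound yields $|v-(v)_{B_{\rho R}}| \leq C(2^k\rho)^{t_0}E(v;x_0,R)$, so the corresponding slice of $(\rho R)^{sp}\int\cdots$ contributes at most $C(2^k\rho)^{t_0(p-1)}\,2^{-ksp}E(v;x_0,R)^{p-1}$; summed over $k\geq 0$ this is a geometric series that converges precisely because $t_0(p-1) < sp$, producing $C\rho^{t_0(p-1)}E(v;x_0,R)^{p-1}$. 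The annuli in $B_R\setminus B_{R/4}$ are absorbed using Proposition \ref{prop:lb}, and on $\mathbb{R}^n\setminus B_R$ I use the identity $v=u$ to bound the far-field contribution by $C\rho^{sp/(p-1)}E(u;x_0,R)$, which is dominated by $C\rho^{t_0}E(u;x_0,R)$ since $t_0 < sp/(p-1)$.

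Finally, $E(v;x_0,R)$ is controlled by $C\,E(u;x_0,R)$ plus the comparison error, obtained from the triangle inequality (with constant $(u)_{B_R(x_0)}$) together with the comparison estimate of Corollary \ref{cor:comparisonsum}. Assembling the preceding bounds produces \eqref{eq:gradoscdecay2}. The main obstacle is the iterative tail computation for $v$: powers of $\rho$ must be tracked consistently across the dyadic scales, and the passage from the interior Hölder regime to the far-field identity $v=u$ must be organized so that only contributions of the form $\rho^{t_0}E(u;x_0,R)$ and $\rho^{-n/(p-1)}\bigl(\omega^{\gamma}E(u;x_0,R) + (R^{sp}\dashint|f|)^{1/(p-1)}\bigr)$ survive, as permitted by the statement.
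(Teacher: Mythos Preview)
Your proposal is correct and follows essentially the same approach as the paper: both freeze the coefficient via Corollary~\ref{cor:comparisonsum} at a scale comparable to $R$ (the paper takes $r=R/8$), control $E(u-v;x_0,\rho R)$ by the comparison error, and then establish $E(v;x_0,\rho R)\lesssim \rho^{t_0}E(v;x_0,R)$ through a dyadic decomposition of the tail in which Proposition~\ref{prop:Holdreg} handles the inner annuli and the resulting geometric series converges precisely because $t_0<sp/(p-1)$, while the outer region is absorbed via Proposition~\ref{prop:lb} together with the identity $v=u$ far away. The paper's bookkeeping is organized slightly differently (it groups the far tail into a single term $I_2$ centered at scale $2^{N_\rho}\rho R$ rather than explicitly invoking $v=u$ on $\mathbb{R}^n\setminus B_R$), but the substance is the same.
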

	
	\begin{proof}
		If $\rho \geq 2^{-6}$, then by an elementary computation similar to \cite[Lemma 2.4]{KMS2}, we have
		\begin{equation} \label{eq:ex}
			E(u;x_0,\rho R) \leq C \rho^{t_0} E(u;x_0,R),
		\end{equation}
		where $C$ does not depend on $\rho$.

		Next, assume that $\rho \in (0,2^{-6})$ and let $$N_\rho:= \max \{N \in \mathbb{N} \mid 2^{N} \rho < 2^{-5} \}.$$ 
			Consider the weak solution $v \in W^{s,p}(B_{R}(x_0)) \cap L^{p-1}_{sp}(\mathbb{R}^n)$ of
		$$
			\begin{cases} \normalfont
				(-\Delta )^s_{p,\widetilde A_{R/8,x_0}} v = 0 & \text{ in } B_{R/8}(x_0) \\
				v = u & \text{ a.e. in } \mathbb{R}^n \setminus B_{R/8}(x_0),
			\end{cases}
		$$
		where $\widetilde A_{R/8,x_0}$ is defined as in Corollary \ref{cor:comparisonsum}. 
		
		In view of Corollary \ref{cor:comparisonsum} and Remark \ref{rem:infc}, we have
		\begin{align*}
			E(u-v;x_0,\rho R) & \leq C \rho^{-\frac{n}{p-1}} \left (R^{-n} \int_{\mathbb{R}^n} |u-v|^{p-1} dx \right )^\frac{1}{p-1} \\
			& \leq C \rho^{-\frac{n}{p-1}} \omega(A;x_0,R)^\gamma E(u;x_0,R) \\ & \quad + C \rho^{-\frac{n}{p-1}} \left (R^{sp} \dashint_{B_{R}(x_0)} |f|dx \right )^\frac{1}{p-1}.
		\end{align*}
	
		On the other hand, splitting into annuli yields
		\begin{align*}
			E(v;x_0,\rho R) & = \left ( \dashint_{B_{\rho R}(x_0)} |v - (v)_{B_{\rho R}(x_0)}|^{p-1} dx \right )^\frac{1}{p-1} \\
			& \quad + \rho^\frac{sp}{p-1} \sum_{k=1}^{N_\rho} \left (R^{sp} \int_{B_{2^k \rho R}(x_0) \setminus B_{2^{k-1} \rho R}(x_0)} \frac{|v  - (v)_{B_{\rho R}(x_0)}|^{p-1}}{|x_0-y|^{n+sp}}dy \right)^\frac{1}{p-1} \\
			& \quad + \rho^\frac{sp}{p-1} \left ( R^{sp} \int_{\mathbb{R}^n \setminus B_{2^{N_\rho} \rho R}(x_0)} \frac{|v - (v)_{B_{\rho R}(x_0)}|^{p-1}}{|x_0-y|^{n+sp}} dx \right )^\frac{1}{p-1} \\
			& \leq C \underbrace{\sum_{k=0}^{N_\rho} 2^{-\frac{sp}{p-1}k} \left (\dashint_{B_{2^k \rho R}(x_0)} |v  - ( v )_{B_{\rho R}(x_0)}|^{p-1}dy \right)^\frac{1}{p-1}}_{=:I_1} \\
			& \quad + \underbrace{\rho^\frac{sp}{p-1} \left ( R^{sp} \int_{\mathbb{R}^n \setminus B_{2^{N_\rho} \rho R}(x_0)} \frac{|v - (v)_{B_{\rho R}(x_0)}|^{p-1}}{|x_0-y|^{n+sp}} dx \right )^\frac{1}{p-1}}_{=:I_2}.
		\end{align*}
		
		First of all, using Proposition \ref{prop:lb} together with the assumption that $t_0<\frac{sp}{p-1}$, Corollary \ref{cor:comparisonsum} and \eqref{eq:ex}, we obtain
		\begin{align*}
		I_2 & \leq C \rho^\frac{sp}{p-1} \left ( \textnormal{Tail} (v-(v)_{B_{2^{N_\rho} \rho R}(x_0)};x_0,2^{N_\rho} \rho R) + |(v)_{B_{\rho R}(x_0)}-(v)_{B_{2^{N_\rho} \rho R}(x_0)} | \right ) \\
		& \leq C \rho^\frac{sp}{p-1} \left ( \textnormal{Tail} (v-(v)_{B_{2^{N_\rho} \rho R}(x_0)};x_0,2^{N_\rho} \rho R) + \norm{v - (v)_{B_{2^{N_\rho} \rho R}(x_0)}}_{L^\infty(B_{\rho R}(x_0))} \right ) \\
		& \leq C \rho^\frac{sp}{p-1} E(v;x_0,2^{N_\rho} \rho R) \\
		& \leq C \left ( \rho^{t_0} + \rho^{-\frac{n}{p-1}} \omega(A,R,x_0)^\gamma \right ) E(u;x_0,R) + C \rho^{-\frac{n}{p-1}} \left (R^{sp} \dashint_{B_{R}(x_0)} |f|dx \right )^\frac{1}{p-1}.
		\end{align*}
		
		Since $\widetilde A_{R/8,x_0}$ is constant in $B_{R/8}(x_0)$, we can apply Proposition \ref{prop:Holdreg} with $\alpha=t_0$, which yields that for any $k \in \{0,...,N_\rho\}$, we have
		\begin{align*}
			\left ( \dashint_{B_{2^k \rho R}(x_0)} |v - \left (v \right )_{B_{2^k \rho R}(x_0)}|^{p-1} dx \right )^\frac{1}{p-1} & \leq \osc_{B_{2^k \rho R}(x_0)} v \\
			& \leq C 2^{k t_0} \rho^{t_0} E(v;x_0,R/8).
		\end{align*}
	Therefore, together with Corollary \ref{cor:comparisonsum} and \eqref{eq:ex} and taking into account that $t_0<\frac{sp}{p-1}$, for $I_1$ we deduce
		\begin{align*}
		I_1 & \leq C \rho^{t_0} E(v;x_0,R/8) \underbrace{\sum_{k=0}^\infty 2^{k \left (t_0-\frac{sp}{p-1} \right )}}_{<\infty} \\
		& \leq C \left ( \rho^{t_0} + \rho^{-\frac{n}{p-1}} \omega(A,R,x_0)^\gamma \right ) E(u;x_0,R) + C \rho^{-\frac{n}{p-1}} \left (R^{sp} \dashint_{B_{R}(x_0)} |f|dx \right )^\frac{1}{p-1}.
	\end{align*}
	
	Combining all the above estimates now leads to
	\begin{align*}
		& E(u;x_0,\rho R) \\ & \leq E(v;x_0,\rho R) + E(u-v;x_0,\rho R) \\
		& \leq C \left ( \rho^{t_0} + \rho^{-\frac{n}{p-1}} \omega(A,R,x_0)^\gamma \right ) E(u;x_0,R) + C \rho^{-\frac{n}{p-1}} \left (R^{sp} \dashint_{B_{R}(x_0)} |f|dx \right )^\frac{1}{p-1},
	\end{align*}
	finishing the proof.
	\end{proof}

Lemma \ref{lem:OscDec} in particular enables us to derive pointwise estimates in terms of fractional maximal functions as defined in Section \ref{Cald}. However, due to the nonlocal nature of our setting, it is more natural to derive such estimates first in terms of \emph{nonlocal fractional sharp maximal functions} which are defined in terms of the nonlocal excess functional as follows.
\begin{definition}[Nonlocal fractional sharp maximal function]
	Let $s \in (0,1)$ and $p \in [2,\infty)$. Given $x_0 \in \mathbb{R}^n$, $R>0$ and $u \in L^{p-1}_{sp}(\mathbb{R}^n)$, we define the nonlocal fractional sharp maximal function ${N}^\#_{R,t}$ of order $t \in [0,1]$ by
	\begin{equation} \label{eq:nonlocmax}
		{N}^\#_{R,t} (u)(x_0):= \sup_{0<r\leq R} r^{-t} E(u;x_0,r).
	\end{equation}
\end{definition}
Note that by definition and H\"older's inequality, we clearly have
\begin{equation} \label{eq:locestnl}
	{M}^\#_{R,t} (u)(x_0) \leq {N}^\#_{R,t} (u)(x_0).
\end{equation}

\begin{remark}[Convolution form] \normalfont
	We remark that for the convolution kernel
	$$\psi:\mathbb{R}^n \to \mathbb{R}, \quad \psi(x):=\frac{1}{\left (1+|x| \right)^{n+sp}}$$
	and the associated family $\{\psi_r\}_{r>0}$ defined by $\psi_r(x):=r^{-n} \psi(x/r)$, 
	it is straightforward to check that for any $r>0$ and any $x_0 \in \mathbb{R}^n$ we have
	$$ E(u;x_0,r) \asymp (|u-(u)_{B_r(x_0)}|^{p-1} \ast \psi_r)^\frac{1}{p-1}(x_0)$$
	and therefore
	\begin{equation} \label{eq:convform}
	{N}^\#_{R,t} (u)(x_0) \asymp \sup_{0<r\leq R} r^{-t} (|u-(u)_{B_r(x_0)}|^{p-1} \ast \psi_r)^\frac{1}{p-1}(x_0). 
	\end{equation}
	While from the point of view of our proof the definition \eqref{eq:nonlocmax} of ${N}^\#_{R,t}$ is the natural one, the convolution form on the right-hand side of \eqref{eq:convform} makes a clearer connection to maximal-type functions arising in classical harmonic analysis, see e.g.\ \cite{FefStein}.
\end{remark}

We now turn to proving the mentioned pointwise maximal function estimates.
\begin{theorem}[Pointwise maximal function bounds] \label{thm:fracmaxest}
	Let $s \in (0,1)$, $p \in [2,\infty)$, $R >0$, $x_0 \in \mathbb{R}^n$ such that $B_{R}(x_0) \subset \Omega$ and let $f$ be as in Definition \ref{def:weaksol}. Moreover, fix some $t \in \big (0,\min \big \{\frac{sp}{p-1},1 \big \} \big)$. Then there exists some small enough $\delta=\delta(n,s,p,\Lambda,t) \in (0,1)$ such that if $A \in \mathcal{L}_0(\Lambda)$ is $\delta$-vanishing in $B_{R}(x_0)$, then for any weak solution $u \in W^{s,p}(\Omega) \cap L^{p-1}_{sp}(\mathbb{R}^n)$ of $(-\Delta )^s_{p,A} u = f$ in $\Omega$, we have
\begin{equation} \label{eq:fmd}
		{N}^\#_{R,t}  (u)(x_0) \leq C \left ( R^{-t} E(u;x_0,R) + \left ({M}_{R,{sp-t(p-1)}} (f)(x_0) \right )^\frac{1}{p-1} \right ),
\end{equation}
where $C$ depends only on $n,s,p,\Lambda,t$.
\end{theorem}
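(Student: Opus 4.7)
The plan is to iterate the excess decay estimate of Lemma \ref{lem:OscDec} along a geometric sequence of radii, absorbing the coefficient error by means of the small BMO hypothesis. Fix an auxiliary exponent $t_0 \in \bigl(t, \min\{\frac{sp}{p-1},1\}\bigr)$, which exists since $t<\min\{\frac{sp}{p-1},1\}$; the strict gap $t_0-t>0$ is what will drive the iteration. Put $R_k := \rho^k R$ and $\Phi_k := R_k^{-t}\, E(u;x_0,R_k)$ for $k \geq 0$, where $\rho \in (0, 1/2]$ is a small dilation parameter to be fixed below. Because $A$ is $\delta$-vanishing in $B_R(x_0)$, one has $\omega(A;x_0,R_k) \leq \delta$ uniformly in $k$.

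Applying Lemma \ref{lem:OscDec} at each scale $R_k$ with dilation $\rho$ and multiplying through by $R_{k+1}^{-t} = \rho^{-t} R_k^{-t}$ yields a recursion of the form
\begin{equation*}
\Phi_{k+1} \leq C_{\ast}\bigl(\rho^{t_0-t} + \rho^{-t-\frac{n}{p-1}} \delta^{\gamma}\bigr) \Phi_k + C_{\ast}\, \rho^{-t-\frac{n}{p-1}}\, M^{\frac{1}{p-1}},
\end{equation*}
where $M := M_{R,\, sp-t(p-1)}(f)(x_0)$ and the data term has been rewritten using the algebraic identity
\begin{equation*}
r^{-t}\Bigl(r^{sp}\dashint_{B_r(x_0)} |f|\,dx\Bigr)^{\frac{1}{p-1}} = \Bigl(r^{sp-t(p-1)} \dashint_{B_r(x_0)} |f|\,dx\Bigr)^{\frac{1}{p-1}} \leq C\, M^{\frac{1}{p-1}}
\end{equation*}
valid for every $r \in (0,R]$. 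The key quantitative step is then a two-stage parameter choice: first I pick $\rho$ so small that $C_{\ast} \rho^{t_0-t} \leq 1/4$, and then I choose $\delta = \delta(n,s,p,\Lambda,t)$ so small that $C_{\ast} \rho^{-t-\frac{n}{p-1}} \delta^{\gamma} \leq 1/4$. The resulting contraction $\Phi_{k+1} \leq \tfrac{1}{2}\, \Phi_k + C'\, M^{1/(p-1)}$ iterates to $\Phi_k \leq \Phi_0 + 2C'\, M^{1/(p-1)}$ uniformly in $k$.

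It remains to interpolate from the discrete scales $R_k$ to an arbitrary $r \in (0,R]$. Given such an $r$, pick $k \geq 0$ with $R_{k+1} < r \leq R_k$; since $r/R_k \in (\rho, 1]$ is then bounded away from zero by a constant depending only on the fixed parameters, the elementary excess comparison used to handle the $\rho \geq 2^{-6}$ branch in the proof of Lemma \ref{lem:OscDec} gives $E(u;x_0,r) \leq C_\rho\, E(u;x_0, R_k)$. Combining this with $r^{-t} \leq \rho^{-t} R_k^{-t}$ yields $r^{-t} E(u;x_0,r) \leq C\, \Phi_k$, and taking the supremum over $r \in (0,R]$ delivers \eqref{eq:fmd}.

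The principal obstacle I anticipate is the unfavourable dependence on $\rho$ in the coefficient error: the factor $\rho^{-t-n/(p-1)}$ forces the strict ordering of the parameter choices (first $\rho$, then $\delta$), and this ordering works only thanks to the genuine gain $\rho^{t_0-t}$ with $t_0>t$. This is also the structural reason why the range of $t$ in Theorem \ref{thm:CZestBMO} must stop strictly below $\min\{\frac{sp}{p-1},1\}$ and why the borderline H\"older case is handled separately in Theorem \ref{thm:fineHold}.
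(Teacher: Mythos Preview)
Your proof is correct and rests on the same core ingredients as the paper's: Lemma \ref{lem:OscDec}, the auxiliary exponent $t_0>t$, and the two-stage choice of $\rho$ followed by $\delta$. The packaging differs slightly. The paper works directly with the truncated supremum
\[
N^{\#,\varepsilon}_{R,t}(u)(x_0):=\sup_{\varepsilon\le r\le R} r^{-t}E(u;x_0,r),
\]
applies Lemma \ref{lem:OscDec} once (going from scale $r$ to scale $r/\rho$) to bound this quantity by a small multiple of itself plus the data, absorbs, and then lets $\varepsilon\to 0$. You instead run a discrete iteration along the geometric sequence $R_k=\rho^kR$ and then interpolate to intermediate radii. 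Both are standard ways to exploit an excess decay; the paper's version avoids the interpolation step, while yours makes the contraction mechanism a bit more explicit. The substantive content---in particular the reason the argument cannot reach $t=\min\{\tfrac{sp}{p-1},1\}$---is identical in both.
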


\begin{proof}
	Fix some $\varepsilon \in (0,R)$ and consider the following modified nonlocal fractional sharp maximal function
	$$
		{N}^{\#,\varepsilon}_{R,t} (u)(x_0):= \sup_{\varepsilon \leq r\leq R} \, r^{-t} E(u;x_0,r).
	$$
	Let $\rho \in (0,1)$ to be chosen small enough in a way such that $\rho$ depends only on $n,s,p,\Lambda,t$. Fix some $r \in [\varepsilon,R]$.
	
	If $r/\rho \geq R$, then by an elementary computation, we have
	\begin{align*}
		r^{-t} E(u;x_0,r) \leq C \rho^{-\frac{n}{p-1}-t} R^{-t} E(u;x_0,R) \leq C R^{-t} E(u;x_0,R),
	\end{align*}
	where $C$ depends only on $n,s,p,\Lambda,t$, since $\rho$ depends only on the aforementioned quantities.

	Next, consider the case when $r/\rho<R$.
	Then by Lemma \ref{lem:OscDec} with $R$ replaced by $r/\rho$ and with respect to $$t_0:= \frac{t+\min \left \{\frac{sp}{p-1},1 \right \}}{2} \in \bigg (t,\min \left \{\frac{sp}{p-1},1\right \} \bigg ),$$ we have
	\begin{align*}
		r^{-t} E(u;x_0,r) & \leq C (\rho^{t_0-t}+\rho^{-\frac{n}{p-1}-t} \omega(A;x_0,r/\rho)^\gamma) \left (\frac{r}{\rho} \right )^{-t} E(u;x_0,r/\rho) \\
		& \quad + C \rho^{-\frac{n}{p-1}-t} \left ( \left (\frac{r}{\rho} \right )^{sp-t(p-1)}  \dashint_{B_{r/\rho}(x_0)} |f|dx \right )^\frac{1}{p-1} \\
		& \leq C (\rho^{t_0-t}+\rho^{-\frac{n}{p-1}-t} \delta^\gamma) {N}^{\#,\varepsilon}_{R,t} (u)(x_0) \\
		& \quad + C \rho^{-\frac{n}{p-1}-t} \left ({M}_{R,{sp-t(p-1)}}  (f)(x_0) \right )^\frac{1}{p-1},
	\end{align*} 
	where so far $C$ does not depend on $\rho$ and $\varepsilon$.
	
	Combining the previous display with the second one in the proof, we obtain
	\begin{equation} \label{eq:fmd1}
		\begin{aligned}
			{N}^{\#,\varepsilon}_{R,t} (u)(x_0) & \leq C_0 (\rho^{t_0-t}+\rho^{-\frac{n}{p-1}-t} \delta^\gamma) {N}^{\#,\varepsilon}_{R,t} (u)(x_0) \\
			& \quad + C R^{-t} E(u;x_0,R) \\
			& \quad + C \rho^{-\frac{n}{p-1}-t} \left ({M}_{R,{sp-t(p-1)}}  (f)(x_0) \right )^\frac{1}{p-1},
		\end{aligned}
	\end{equation}
	where $C_0 \geq 1$ does not depend on $\rho$, while $C$ now depends on $m$, but both $C_0$ and $C$ do not depend on $\varepsilon$.
	
	Now choose $\rho$ small enough such that
	$$C_0 \rho^{t_0-t} \leq \frac{1}{4}$$
	and then $\delta \in (0,1)$ small enough such that
	$$ C_0 \rho^{-\frac{n}{p-1}-t} \delta^\gamma \leq \frac{1}{4}.$$
	Since clearly ${N}^{\#,\varepsilon}_{R,t}  (u)(x_0)<\infty$, the above choices of $\rho$ and $\delta$ enable us to reabsorb the first term of the right-hand side of \eqref{eq:fmd1}, which implies the estimate
	\begin{align*}
		{N}^{\#,\varepsilon}_{R,t}  (u)(x_0) \leq C \left ( R^{-t} E(u;x_0,R) + \left ({M}_{R,{sp-t(p-1)}} (f)(x_0) \right )^\frac{1}{p-1} \right ),
	\end{align*}
	where $C$ still does not depend on $\varepsilon$. Therefore, letting $\varepsilon \to 0$ yields the desired estimate \eqref{eq:fmd}.
\end{proof}

\section{Fine regularity estimates}

Combining Theorem \ref{thm:fracmaxest} with the relations of fractional sharp maximal functions with fractional Sobolev spaces from Proposition \ref{prop:embedding} now yields fine regularity estimates in such spaces. The required assumptions on the right-hand side $f$ simply arise from the mapping properties of the standard fractional maximal function given by Proposition \ref{prop:fracmaxest}.

\begin{corollary} \label{cor:CZest1}
	Let $s \in (0,1)$, $p \in [2,\infty)$, $\Lambda \geq 1$ and let $f$ be as in Definition \ref{def:weaksol} with $\Omega$ replaced by $B_1$. In addition, fix some $s \leq t < \min \big \{\frac{sp}{p-1},1 \big \}$ and some $q \in \big (1,\frac{n}{sp-t(p-1)} \big)$. There exists some $\delta=\delta(n,s,p,\Lambda,t)>0$, such that if $A \in \mathcal{L}_0(\Lambda)$ is $\delta$-vanishing in $B_1$, 
	then for any weak solution $u \in W^{s,p}(B_1) \cap L^{p-1}_{sp}(\mathbb{R}^n)$
	of the equation
	$
	(-\Delta )^s_{p,A} u = f \text{ in } B_1
	$
	and $q^\star:=\frac{nq(p-1)}{n-(sp-t(p-1))q}$,
	we have
	\begin{equation} \label{eq:CZest1}
		\begin{aligned}
			\norm{u}_{W^{t,q^\star}(B_{1/16})} & \leq C \left (E(u;0,1) + \norm{f}_{L^q(B_1)}^\frac{1}{p-1} \right ),
		\end{aligned}
	\end{equation}
	where $C$ depends only on $n,s,p,\Lambda,t,q$.
\end{corollary}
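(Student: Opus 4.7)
The plan is to derive the Sobolev estimate from the pointwise nonlocal sharp maximal bound of Theorem~\ref{thm:fracmaxest} via the fractional embedding of Proposition~\ref{prop:embedding} and the mapping properties of the fractional maximal function in Proposition~\ref{prop:fracmaxest}. The technical device is to trade a tiny amount of differentiability for integrability, so that the strict inequality $\widetilde q<q^\star$ required by Proposition~\ref{prop:embedding} is met while the integrability exponent of $f$ on the right-hand side remains exactly $q$.

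I would first fix $\widetilde t\in\bigl(t,\min\{\tfrac{sp}{p-1},1\}\bigr)$, say $\widetilde t:=\tfrac{1}{2}\bigl(t+\min\{\tfrac{sp}{p-1},1\}\bigr)$, and define
\[
\widetilde q:=\frac{nq(p-1)}{n-q(sp-\widetilde t(p-1))},
\]
which by construction satisfies the Sobolev matching relation $t-n/q^\star=\widetilde t-n/\widetilde q$; a short direct check then yields $1<\widetilde q<q^\star$ and $\widetilde q/(p-1)=\tfrac{nq}{n-q(sp-\widetilde t(p-1))}$. Next, at every $x_0\in B_{1/16}$ I would apply Theorem~\ref{thm:fracmaxest} with differentiability index $\widetilde t$ and $R=1/2$, choosing $\delta$ small enough in terms of $n,s,p,\Lambda,\widetilde t$ (hence of $n,s,p,\Lambda,t$) so that the small-BMO hypothesis on $B_{1/2}(x_0)$ is inherited from the one on $B_1$. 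Combined with \eqref{eq:locestnl} this produces
\begin{equation*}
M^{\#}_{1/2,\widetilde t}(u)(x_0)\le C\Bigl(E(u;x_0,1/2)+M_{1/2,\,sp-\widetilde t(p-1)}(f)(x_0)^{\frac{1}{p-1}}\Bigr)\qquad\text{for all }x_0\in B_{1/16}.
\end{equation*}

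Taking $L^{\widetilde q}(B_{1/2})$-norms, the fractional maximal contribution of $f$ is controlled by Proposition~\ref{prop:fracmaxest} with $\beta:=sp-\widetilde t(p-1)$: since $\widetilde q/(p-1)=nq/(n-\beta q)$ is exactly the target exponent produced there, extending $f$ by zero outside $B_1$ gives $\|M_\beta(f)^{1/(p-1)}\|_{L^{\widetilde q}(\mathbb R^n)}\le C\|f\|_{L^q(B_1)}^{1/(p-1)}$. The excess term is handled by showing $E(u;x_0,1/2)\le C\,E(u;0,1)$ uniformly for $x_0\in B_{1/16}$, which follows from Remark~\ref{rem:infc} applied with $c=(u)_{B_1}$: the local $L^{p-1}$ average is bounded on the slightly larger ball $B_{9/16}\subset B_1$, and the tail of $u-(u)_{B_1}$ is split into the annulus $B_1\setminus B_{1/2}(x_0)$ and the exterior region $\mathbb R^n\setminus B_1$. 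Plugging the resulting $L^{\widetilde q}$-bound on $M^{\#}_{1/2,\widetilde t}(u)$ into Proposition~\ref{prop:embedding} applied with $g=u-(u)_{B_1}$, $s=t$, $q=q^\star$, $\widetilde s=\widetilde t$ and the above $\widetilde q$ then yields the seminorm estimate on $B_{1/16}$; the $L^{q^\star}(B_{1/16})$-contribution to the full norm of $u-(u)_{B_1}$ is provided by a local $L^\infty$-bound for solutions of the inhomogeneous equation in the spirit of Proposition~\ref{prop:lb}, which produces exactly the same data term $\|f\|_{L^q(B_1)}^{1/(p-1)}$.

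The main obstacle I expect is checking carefully that the hypothesis ``$A$ is $\delta$-vanishing in $B_1$'' transfers, with a controlled constant, to the $\widetilde\delta$-vanishing condition in $B_{1/2}(x_0)$ required by Theorem~\ref{thm:fracmaxest} at every $x_0\in B_{1/16}$, since the definitions compare mean oscillations around shifted centers; this step will likely rely either on a standard off-center small-BMO estimate or on interpreting the hypothesis slightly more liberally. Once that point is settled, everything else reduces to bookkeeping of exponents, tail decay and the uniform excess bound.
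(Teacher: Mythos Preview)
Your outline follows essentially the same route as the paper: perturb $t$ to a slightly larger $\widetilde t$ (the paper writes $t_\varepsilon=t+\varepsilon$), apply Theorem~\ref{thm:fracmaxest} pointwise at that higher index, take $L^{\widetilde q}$-norms, feed the result into Proposition~\ref{prop:embedding}, and control the fractional-maximal term on the right via Proposition~\ref{prop:fracmaxest} after extending $f$ by zero. The exponent bookkeeping, the use of \eqref{eq:locestnl}, and the uniform estimate $E(u;x_0,1/2)\le C\,E(u;0,1)$ are all exactly as in the paper.

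There is, however, one genuine gap. You propose to control $\|u-(u)_{B_1}\|_{L^{q^\star}(B_{1/16})}$ via ``a local $L^\infty$-bound for solutions of the inhomogeneous equation in the spirit of Proposition~\ref{prop:lb}''. This does not work across the full range of $q$: Proposition~\ref{prop:lb} is stated only for the \emph{homogeneous} equation, and an $L^\infty$ bound for the inhomogeneous problem requires $f$ to lie at least in weak-$L^{n/(sp)}$, which is not implied by $f\in L^q(B_1)$ when $sp<n$ and $q$ is close to $1$. The paper instead invokes the zero-order potential estimates of \cite[Theorem~1.2]{KMS2} (more precisely \cite[Corollary~1.1]{KMS2}), which give
\[
\|u\|_{L^{q^\star}(B_{1/2})}\le C\bigl(E(u;0,1)+\|f\|_{L^q(B_1)}^{1/(p-1)}\bigr)
\]
directly for every admissible $q>1$; that is the tool you need here rather than an $L^\infty$ bound.

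As for your flagged worry about transferring the $\delta$-vanishing hypothesis from $B_1$ to $B_{1/2}(x_0)$ for varying $x_0$: the paper's proof glosses over this point too. In the downstream application (the proof of Theorem~\ref{thm:CZestBMO}) the actual input is that $A$ is $(\delta,R_0)$-BMO in $\Omega$, which after scaling gives small mean oscillation at \emph{all} relevant centers, so the difficulty does not arise there; for Corollary~\ref{cor:CZest1} in isolation one should read the hypothesis in that slightly stronger sense.
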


\begin{proof}
Fix some $t \in \big [s,\big \{\frac{sp}{p-1},1 \big \} \big )$ and some $q \in \big (1,\frac{n}{sp-t(p-1)} \big)$ and let $\delta=\delta(n,s,p,\Lambda,t)>0$ be given by Theorem \ref{thm:fracmaxest}. Extend $f$ to $\mathbb{R}^n$ by setting $f \equiv 0$ in $\mathbb{R}^n \setminus B_1$.

Fix some small enough $\varepsilon>0$ such that $t_\varepsilon := t+\varepsilon$ belongs to the range (\ref{eq:trange}) and set $$q^\star_\varepsilon:=\frac{nq^\star}{n+\varepsilon q^\star}= \frac{nq(p-1)}{n-(sp-t_\varepsilon(p-1))q}<q^\star.$$

Applying the $L^{q^\star_\varepsilon}$ norm over $B_{1/2}$ to both sides of the estimate \eqref{eq:fmd} from Theorem \ref{thm:fracmaxest} and taking into account \eqref{eq:locestnl}, we obtain
	\begin{align*}
		\norm{{M}^\#_{1/2,\widetilde t}(u)}_{L^{q^\star_\varepsilon}(B_{1/2})} & \leq C \left ( E(u;0,1) + \norm{{M}_{1/2,{sp-t_\varepsilon(p-1)}} (f)}_{L^\frac{q^\star_\varepsilon}{p-1}(\mathbb{R}^n)}^\frac{1}{p-1} \right ).
	\end{align*}

Combined with Proposition \ref{prop:embedding} and the mapping properties of the standard fractional maximal function from Proposition \ref{prop:fracmaxest}, we obtain
\begin{align*}
||u||_{W^{t,q^\star}(B_{1/16})} & \leq C \left (\norm{u}_{L^{q^\star_\varepsilon}(B_{1/2})} + \norm{{M}^\#_{1/2,t_\varepsilon}(u)}_{L^{q^\star_\varepsilon}(B_{1/2})} \right ) \\
& \leq C \left ( \norm{u}_{L^{q^\star}(B_{1/2})} + E(u;0,1) + \norm{{M}_{{sp-t_\varepsilon(p-1)}} (f)}_{L^\frac{q^\star_\varepsilon}{p-1}(\mathbb{R}^n)}^\frac{1}{p-1} \right ) \\
& \leq C \left (\norm{u}_{L^{q^\star}(B_{1/2})} + E(u;0,1) + \norm{f}_{L^q(B_1)}^\frac{1}{p-1} \right ) ,
\end{align*}
where we also used that $$\frac{q^\star_\varepsilon}{p-1} = \frac{nq}{n-(sp-t_\varepsilon(p-1))q}.$$
Since as a consequence of the zero-order potential estimates \cite[Theorem 1.2]{KMS2} (see \cite[Corollary 1.1]{KMS2}) we in particular have the estimate 
$$ \norm{u}_{L^{q^\star}(B_{1/2})} \leq C \left ( E(u;0,1) + \norm{f}_{L^q(B_1)}^\frac{1}{p-1} \right ),$$
the proof is complete.
\end{proof}

Theorem \ref{thm:CZestBMO} now follows from Corollary \ref{cor:CZest1} by scaling and covering arguments. 

\begin{proof}[Proof of Theorem \ref{thm:CZestBMO}]
		Fix a relatively compact domain ${\Omega^\prime} \Subset \Omega$ and let $\delta=\delta(n,s,p,\Lambda,t)>0$ be given by Corollary \ref{cor:CZest1}. There exists some $r \in (0,R_0)$ such that for any $z \in \Omega^\prime$, we have $B_{r}(z) \Subset \Omega$. Since $A$ is $(\delta,R_0)$-BMO in $\Omega$, for any $z \in \Omega^\prime$ we thus obtain that $A$ is $\delta$-vanishing in $B_{r}(z)$. For any $z \in \Omega^\prime$, we define the scaled functions
		\begin{equation} \label{eq:scaled}
		u_z(x):=u(rx+z), \quad f_z(x):=r^{sp} f(rx+z), \quad A_z(x,y):=A(rx+z,ry+z)
		\end{equation}
		and observe that $u_z \in W^{s,p}(B_1) \cap L_{sp}^{p-1}(\mathbb{R}^n)$ is a weak solution of $(-\Delta)^s_{p,A_z} u_z=f_z$ in $B_1$ and that $A_z \in \mathcal{L}_0(\Lambda)$ is $\delta$-vanishing in $B_1$. Therefore, by Corollary \ref{cor:CZest1} and rescaling for any $z \in \Omega^\prime$ we deduce that
		\begin{align*}
			\norm{u}_{W^{t,q^\star}(B_{r/16}(z))} & = \norm{u_z}_{W^{t,q^\star}(B_{1/16})} \\ & \leq C \left ( E(u_z;0,1) + \norm{f_z}_{L^q(B_1)}^\frac{1}{p-1} \right ) \\
			& \leq C \left ( E(u;r,z) + \norm{f}_{L^q(B_r(z))}^\frac{1}{p-1} \right ),
		\end{align*}
		where $C$ depends only on $n,s,p,\Lambda,t,q,r$.
	Since $\left \{B_{r/16}(z) \right \}_{z \in {\Omega^\prime}}$ is an open covering of $\overline {\Omega^\prime}$ and $\overline {\Omega^\prime}$ is compact, there exists a finite subcover $\left \{B_{r/16}(z_i) \right \}_{i=1}^N$ of $\Omega^\prime$. Now fixing some arbitrary reference point $x_0 \in \Omega^\prime$ and summing the above estimates over $i=1,...,N$ yields
	\begin{equation} \label{eq:sumest}
		\begin{aligned}
			\norm{u}_{W^{t,q^\star}(\Omega^\prime)} & \leq \sum_{i=1}^N \norm{u}_{W^{t,q^\star}(B_{r/16}(z_i))} \\
			& \leq C \sum_{i=1}^N \left ( E(u;r,z_i) + \norm{f}_{L^q(B_r(z_i))}^\frac{1}{p-1} \right )\\
			& \leq C \Bigg [ \left (\int_{\Omega} |u|^{p-1} dx \right )^\frac{1}{p-1} + \left (\int_{\mathbb{R}^n \setminus \Omega} \frac{|u(y)|^{p-1}}{|x_0-y|^{n+sp}}dy \right )^\frac{1}{p-1} + \norm{f}_{L^q(\Omega)}^\frac{1}{p-1} \Bigg ],
		\end{aligned}
	\end{equation}
	where $C$ depends only on $n,s,p,\Lambda,t,q,R_0,\Omega^\prime,\Omega$, proving that $u \in W^{t,q}_{\loc}(\Omega)$.
	
	In order to prove the estimate \eqref{eq:CZregBMO}, fix some $R>0$ and some $x_0 \in \Omega$ such that $B_R(x_0) \subset \Omega$ and consider the scaled functions 
	\begin{equation} \label{eq:scaled2}
	u_R(x):=u(Rx), \quad f_R(x):=R^{sp} f(Rx), \quad A_R(x,y):=A(Rx,Ry).
	\end{equation}
	Note that $u_R \in W^{s,p}(B_1(x_0)) \cap L_{sp}^{p-1}(\mathbb{R}^n)$ is a weak solution of $(-\Delta)^s_{p,A_R} u_R=f_R$ in $B_1(x_0)$, while $A_R \in \mathcal{L}_0(\Lambda)$ is $(\delta,R_0/R)$-BMO and thus $(\delta,R_0/ \max \{\textnormal{diam}(\Omega),1\})$-BMO in $B_1(x_0)$.
	Therefore, applying the estimate \eqref{eq:sumest} to $u_R$ with $\Omega$ replaced by $B_1(x_0)$ and $\Omega^\prime$ replaced by $B_{1/2}(x_0)$ along with changing variables yields
	\begin{align*}
		R^{-n/q^\star} [u]_{W^{t,q^\star}(B_{R/2}(x_0))} & = C R^{-t} \left (\int_{B_{1/2}(x_0)} \int_{B_{1/2}(x_0)} \frac{|u_R(x)-u_R(y)|^{q^\star}}{|x-y|^{n+tq^\star}} dydx \right )^{1/q} \\
		& \leq C R^{-t} \left ( E(u_R;x_0,1) + \norm{f_R}_{L^q(B_1(x_0))}^\frac{1}{p-1} \right ) \\
		& \leq C \left (R^{-t} E(u;x_0,R) + R^{-n/q^\star} \norm{f}_{L^q(B_R(x_0))}^\frac{1}{p-1} \right ) ,
	\end{align*}
	proving also the estimate \eqref{eq:CZregBMO}. Thus, the proof is finished.
\end{proof}

Next, we deduce our H\"older regularity result Theorem \ref{thm:fineHold} from Theorem \ref{thm:fracmaxest}.

\begin{proof}[Proof of Theorem \ref{thm:fineHold}]
	Let us first consider the case when $\Omega=B_1$ and $A$ is $\delta$-vanishing in $B_1$ for $\delta$ given as in Theorem \ref{thm:fracmaxest} applied with $t=\alpha$. 
	
	Extend $f$ to $\mathbb{R}^n$ by setting $f \equiv 0$ in $\mathbb{R}^n \setminus B_1$.
	By Proposition \ref{prop:sfm}, we have
	\begin{equation} \label{eq:CaldHold}
		[u]_{C^{\alpha}(B_{1/8})} \leq C \sup_{x_0 \in B_{1/8}} {M}^\#_{1/2,\alpha}(u)(x_0).
	\end{equation}
	
	Using the pointwise estimate \eqref{eq:fmd} from Theorem \ref{thm:fracmaxest} and taking into account \eqref{eq:CaldHold}, \eqref{eq:locestnl} as well as Proposition \ref{lem:Linfbd} with $\beta=sp-\alpha(p-1) \in (0,n)$, we obtain
	\begin{equation} \label{eq:scale1}
	\begin{aligned}
		[u]_{C^{\alpha}(B_{1/8})} & \leq C  \sup_{x_0 \in B_{1/8}} {M}^\#_{1/2,\alpha}(u)(x_0) \\
		& \leq C \left ( E(u;0,1) + \sup_{x_0 \in \mathbb{R}^n} {M}_{{sp-\alpha(p-1)}} (f)(x_0) \right ) \\
		& \leq C \left ( E(u;0,1) + \norm{f}_{L^{\frac{n}{sp-\alpha(p-1)},\infty}(B_1)}^\frac{1}{p-1} \right ) .
	\end{aligned}
	\end{equation}
	Next, let us consider the general case when $\Omega$ is an arbitrary bounded domain and when $A$ is $(\delta,R_0)$-BMO in $\Omega$ for $\delta$ given as in Theorem \ref{thm:fracmaxest} applied with $t=\alpha$. Fix a relatively compact domain ${\Omega^\prime} \Subset \Omega$. There exists some $r \in (0,R_0)$ such that for any $z \in \Omega^\prime$, we have $B_{r}(z) \Subset \Omega$. In particular, for any $z \in \Omega^\prime$ $A$ is $\delta$-vanishing in $B_{r}(z)$.
	
	For any $z \in \Omega^\prime$, let the scaled functions $u_z$, $f_z$ and $A_z$ be defined as in \eqref{eq:scaled}, so that $u_z \in W^{s,p}(B_1) \cap L_{sp}^{p-1}(\mathbb{R}^n)$ is a weak solution of $(-\Delta)^s_{p,A_z} u_z=f_z$ in $B_1$ and $A_z \in \mathcal{L}_0(\Lambda)$ is $\delta$-vanishing in $B_1$. Therefore, using \eqref{eq:scale1}, we deduce
	\begin{equation} \label{eq:CaScaled}
	\begin{aligned}
		[u]_{C^{\alpha}(B_{r/8}(z))} = [u_z]_{C^{\alpha}(B_{1/8})} & \leq C \left ( E(u_z;0,1) + \norm{f_z}_{L^{\frac{n}{sp-\alpha(p-1)},\infty}(B_1)}^\frac{1}{p-1} \right ) \\
		& \leq C \left ( E(u;r,z) + \norm{f}_{L^{\frac{n}{sp-\alpha(p-1)},\infty}(B_r(z))}^\frac{1}{p-1} \right ),
	\end{aligned}
	\end{equation}
	where $C$ depends only on $n,s,p,\Lambda,\alpha,r$.
	
	Again, there exists a finite subcover $\left \{B_{r/16}(z_i) \right \}_{i=1}^N \subset \left \{B_{r/16}(z) \right \}_{z \in {\Omega^\prime}}$ of $\Omega^\prime$. Now fix points $x,y \in \Omega^\prime$ with $x \neq y$ and also some arbitrary reference point $x_0 \in \Omega^\prime$. Then $x \in B_{r/16}(z_i)$ for some $i \in \{1,...,N\}$. 
	
	If $|x-y| < r/16$, then $y \in B_{r/8}(z_i)$, so that \eqref{eq:CaScaled} yields
	\begin{align*}
		\frac{|u(x)-u(y)|}{|x-y|^\alpha} & \leq C \left ( E(u;r,z_i) + \norm{f}_{L^{\frac{n}{sp-\alpha(p-1)},\infty}(B_r(z_i))}^\frac{1}{p-1} \right ) \\
		& \leq C \Bigg [ \left (\int_{\Omega} |u|^{p-1} dx \right )^\frac{1}{p-1} + \left ( \int_{\mathbb{R}^n \setminus \Omega} \frac{|u(y)|^{p-1}}{|x_0-y|^{n+sp}}dy \right )^\frac{1}{p-1} \\ & \quad + \norm{f}_{L^{\frac{n}{sp-\alpha(p-1)},\infty}(\Omega)}^\frac{1}{p-1} \Bigg ],
	\end{align*}
	where $C$ depends only on $n,s,p,\Lambda,\alpha,R_0,\Omega^\prime,\Omega$.
	
	If on the other hand $|x-y| \geq r/16$, then we estimate
	$$ \frac{|u(x)-u(y)|}{|x-y|^\alpha} \leq 2 (16/r)^\alpha \sup_{\Omega^\prime} |u|.$$
	
	Since as a well-known consequence of the zero-order potential estimates \cite[Theorem 1.2]{KMS2} we in particular have the estimate
	\begin{align*}
	\sup_{\Omega^\prime} |u| & \leq C \Bigg [ \left (\int_{\Omega} |u|^{p-1} dx \right )^\frac{1}{p-1} + \left ( \int_{\mathbb{R}^n \setminus \Omega} \frac{|u(y)|^{p-1}}{|x_0-y|^{n+sp}}dy \right )^\frac{1}{p-1} \\ & \quad + \norm{f}_{L^{\frac{n}{sp-\alpha(p-1)},\infty}(\Omega)}^\frac{1}{p-1} \Bigg ],
	\end{align*}
	combining the previous three displays yields the estimate
	\begin{equation} \label{eq:genHold}
	\begin{aligned}
		\norm{u}_{C^{\alpha}(\Omega^\prime)} & \leq C \Bigg [ \left (\int_{\Omega} |u|^{p-1} dx \right )^\frac{1}{p-1} + \left ( \int_{\mathbb{R}^n \setminus \Omega} \frac{|u(y)|^{p-1}}{|x_0-y|^{n+sp}}dy \right )^\frac{1}{p-1} \\ & \quad + \norm{f}_{L^{\frac{n}{sp-\alpha(p-1)},\infty}(\Omega)}^\frac{1}{p-1} \Bigg ],
	\end{aligned}
	\end{equation}
	where $C$ depends only on $n,s,p,\Lambda,\alpha,R_0,\Omega^\prime,\Omega$, so that $u \in C^\alpha_{loc}(\Omega)$ as desired. 
	
	In order to prove the estimate \eqref{eq:FineHoldest}, fix some $R>0$ and some $x_0 \in \Omega$ such that $B_R(x_0) \subset \Omega$ and consider the scaled functions $u_R$, $f_R$ and $A_R$ as defined in \eqref{eq:scaled2}, so that $u_R$ is a weak solution of $(-\Delta)^s_{p,A_R} u_R=f_R$ in $B_1(x_0)$, while $A_R \in \mathcal{L}_0(\Lambda)$ is $(\delta,R_0/ \max \{\textnormal{diam}(\Omega),1\})$-BMO in $B_1(x_0)$.
	Therefore, applying the estimate \eqref{eq:genHold} to $u_R$ with $\Omega$ replaced by $B_1(x_0)$ and $\Omega^\prime$ replaced by $B_{1/2}(x_0)$ yields
	\begin{align*}
		R^{\alpha}[u]_{C^\alpha(B_{R/2}(x_0))} & = [u_R]_{C^\alpha(B_{1/2}(x_0))} \\
		& \leq C \left ( E(u_R;x_0,1) + \norm{f_R}_{L^{\frac{n}{sp-\alpha(p-1)},\infty}(B_1(x_0))}^\frac{1}{p-1} \right ) \\ & \leq C \left ( E(u;x_0,R) + R^\alpha \norm{f}_{L^{\frac{n}{sp-\alpha(p-1)},\infty}(B_R(x_0))}^\frac{1}{p-1} \right ) ,
	\end{align*}
	proving the estimate \eqref{eq:FineHoldest}.
\end{proof}

Finally, Corollary \ref{cor:HD} follows directly from Theorem \ref{thm:CZestBMO} and well-known embedding results for fractional Sobolev spaces.
\begin{proof}[Proof of Corollary \ref{cor:HD}]
	Fix some $t \in \left (s, \min \left \{\frac{sp}{p-1},1 \right \} \right )$ and additionally some $t^\prime \in \left (t, \min \left \{\frac{sp}{p-1},1 \right \} \right )$. Since $f \in L^\frac{p}{p-1}(\Omega)$, Theorem \ref{thm:CZestBMO} yields $u \in W^{t^\prime,\widetilde p}_{\loc}(\Omega)$ for $\widetilde p:=\frac{np}{n- \big (\frac{sp}{p-1} - t^\prime \big)p}>p$, which by \cite[Proposition 2.5]{MeI} implies $u \in W^{t,p}_{\loc}(\Omega)$. 
\end{proof}

\printbibliography 
	
\end{document}